\theoremstyle{definition}
\newtheorem{theorem}{Theorem}
\newtheorem{proposition}{Proposition}
\newtheorem{remark}{Remark}
\newcommand{\norm}[1]{\|#1\|}
\newcommand{\ip}[2]{\langle #1,#2\rangle}
\newcommand{\abs}[1]{|#1|}
\newcommand{\qqquad}{\qquad\quad}
\numberwithin{equation}{section}
\newcommand{\opi}{\overline{\pi}}
\title{Differential operators
  on {B}ergman space on bounded symmetric domains}
\author{Jens Gerlach Christensen}
\address{ Department of Mathematics, Colgate University} 
\email{jchristensen@colgate.edu}
\urladdr{http://www.math.colgate.edu/~jchristensen}
\author{Christopher Benjamin Deng}
\address{ Department of Mathematics, Cornell University}
\email{cbd62@cornell.edu}
\urladdr{https://math.cornell.edu/christopher-deng}
\begin{document}

\maketitle

\begin{abstract}
	We classify self-adjoint first-order differential operators
	on weighted Bergman spaces on the $N$-dimensional unit ball $\mathbb{B}^N$ 
	and $\mathbb{D}^2$ of $2\times2$ complex matrices satisfying $I-ZZ^*>0$.
	Our main tools are the discrete series representations
	of $\mathrm{SU}(N,1)$ and $\mathrm{SU}(2,2)$. 
	We believe that our observations extend to general bounded symmetric domains.
  \end{abstract}

\section{Introduction}
In a recent paper \cite{Deng2022}, the authors showed that all first order self-adjoint differential
operators on weighted Bergman space on the unit disc come from the holomorphic discrete series representation.
The purpose of the present paper is to test if this result generalizes to
to higher dimensional and higher rank bounded symmetric domains. In particular, we show that
this is the case for the Bergman space on the (rank 1) unit ball $\mathbb{B}^n$ in $\mathbb{C}^n$
and the rank 2 space $D$ consisting of $2\times 2$ complex matrices with operator norm less than 1.
We believe this gives us enough reason to claim that our observations extend to general bounded symmetric domains.
The proof of this claim eludes us at this stage. It further begs the question if we can give a Lie/representation
theoretic classification of higher order self-adjoint differential operators on Bergman spaces.

\section{Background and statement of main result}

\subsection{Lie groups and representations}
Let $G$ be a Lie group with Lie algebra $\mathfrak{g}$ and let $\pi$ be a representation of the group on a Hilbert space $H$.
The space of smooth vectors $H^\infty_\pi$ is the (dense in $H$) collection of vectors $v$ for which
$x\mapsto \pi(x)v$ is a smooth mapping from $G$ to $H$. For every $X\in\mathfrak{g}$ we get an operator $\pi(X)$
with domain $H_\pi^\infty$ defined by
$$
\pi(X)v = \frac{d}{dt}\Big|_{t=0} \pi(\exp(tX))v.
$$
This operator is densely defined and skew-symmetric, and therefore it is closable. The closure
will be denoted $\opi(X)$ and it is skew-adjoint (see \cite{Segal1951}).

\subsection{Bounded symmetric domains of type AIII}

In this paper we will restrict ourselves to bounded
symmetric domains $D$ consisting of
complex $N\times M$ matrices $Z$ for which
$I-Z^*Z>0$. The group $G=SU(N,M)$
consists of block matrices 
$x=\begin{bmatrix}
  a & b \\
  c & d
\end{bmatrix}$ with determinant $1$, $a$ is $N\times N$,
$b$ is $N\times M$, $c$ is $M\times N$ and $d$ is $M\times M$ and
which satisfy $a^*a-c^*c=I_{N\times N}$, $a^*b=c^*c$
and $b^*b-d^*d=-I_{M\times M}$.
The group $G$ acts transitively on the domain $D$ by the action
$$
x\cdot Z = (aZ+b)(cZ+d)^{-1}.
$$
The subgroup $K$ that fixes the origin $o$
consists of matrices
$\begin{bmatrix}
   a & 0 \\
   0 & d
 \end{bmatrix},
 $
 in $S(U(N)\times U(M))$
 and the domain $D$ can be identified with the homogenous space $G/K$.
 The complex Jacobian $J(x,Z)$ of the mapping $Z\mapsto x\cdot Z$ is given by
 $$
 J(x,Z) = \det(cZ+d)^{-(N+M)},
 $$
 By the chain rule $J(xy,Z) = J(x,y\cdot Z)J(y,Z)$.

\subsection{Bounded symmetric domains and Bergman spaces}

Define the function $h(Z)=\det(I-Z^*Z)$ which is strictly positive
on $D$ and satisfies
$h(x\cdot Z) = |J(x,Z)|^{2/(N+M)}h(Z)$.
The weighted Bergman space
$A^p_\xi(D)$ is the space of holomorphic functions on $D$
for which
$$
\int_D |f(Z)|^ph(Z)^{\xi}\,dZ <\infty
$$
and it contains constant functions (is non-zero) when $\xi>-1$.
It forms a Banach space when equipped with the norm
$$
\| f\|_{p,\xi} = \left( \int_D |f(Z)|^ph(Z)^{\xi}\,dZ\right)^{1/p}.
$$

When $p=2$ the space $A^2_\xi(D)$ is a Hilbert space with inner
product
$$
\langle f,g\rangle = \int_D f(Z)\overline{g(Z)}h(Z)^\xi \,dZ
$$
and the norm in this case will be denoted simply $\| f\|_\xi$.
For $\xi>-1$ the holomorphic discrete series represention $\pi_\xi$ of
$G$ on $A^2_\xi(D)$ is given by
$$
\pi_\xi(x)f(Z) = \det(cZ+d)^{-(\xi+N+M)} f(x^{-1}\cdot Z)
$$
when $x^{-1}=\begin{bmatrix}a & b \\ c&d
\end{bmatrix}$.
This (projective) representation is unitary and irreducible.
Notice that it is only a representation of $G$ if $\xi$ is an integer,
but that it defines a projective representation of the universal cover
of $G$ in other cases. In this paper, we will only concern ourselves
with differential operators arising from the Lie algebra $\mathfrak{g}$,
so this distinction between the groups is irrelevant.

\subsection{First order differential operators}

For multi-indices $\alpha$ and functions $f_\alpha \in A^2_\xi(D)$
we define a first order differential
operator $L:A^2_\xi(D)\to A^2_\xi(D)$ to be of the form
$$
L = f_0 + \sum_{|\alpha|=1} f_\alpha \partial^\alpha.
$$
This operator has domain $\mathcal{D}(L)$ consisting of polynomials
in $N$ variable ($N$ is the dimension of the ambient $\mathbb{C}^N$)
and it is thus densely defined. We will now show that if $L$ is symmetric,
then the functions $f_\alpha$ are polynomials of degree $|\alpha|+1$ or less.
This relies on the fact that homogeneous polynomials of differing 
degree are orthogonal in $A^2_\xi(D)$ (see \cite{Hua1963} and the
fact that $D$ is circular \cite{Koranyi1965}).

\begin{proposition}\label{prop1}
  Let $L = f_0 + \sum_{|\alpha|=1} f_\alpha \partial^\alpha$ be a first order
  differential operator on $A^2_\xi(D)$
  with polynomials as domain. If $L$ is symmetric,
  then $f_\alpha$ is a polynomial
  of degree less than or equal to $|\alpha|+1$.
\end{proposition}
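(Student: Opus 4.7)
The plan is to exploit the orthogonality of homogeneous polynomials of different degrees in $A^2_\xi(D)$ to kill off high-degree components of the coefficient functions $f_\alpha$ one at a time. I would begin by expanding each coefficient into its homogeneous Taylor components at the origin: write $f_0 = \sum_{k\ge 0} g_k^{(0)}$ and, for each $|\alpha|=1$, $f_\alpha = \sum_{k\ge 0} g_k^{(\alpha)}$, where $g_k^{(\alpha)}$ is a homogeneous polynomial of degree $k$. The goal becomes showing $g_k^{(0)}=0$ for $k\ge 2$ and $g_k^{(\alpha)}=0$ for $k\ge 3$.

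The key observation is that multiplication by $g_k^{(\alpha)}$ raises homogeneous degree by $k$, while each $\partial^\alpha$ lowers it by $|\alpha|=1$. So if $q$ is homogeneous of degree $n$, the piece of $Lq = f_0 q + \sum_{|\alpha|=1} f_\alpha \partial^\alpha q$ that is homogeneous of degree $j$ is $g_{j-n}^{(0)} q + \sum_\alpha g_{j-n+1}^{(\alpha)}\partial^\alpha q$ (with negative subscripts treated as zero). For $j<n-1$ this contribution vanishes, so by orthogonality of different homogeneous degrees, $\langle p, Lq\rangle = 0$ whenever $\deg p < n-1$ and $p$ is homogeneous.

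Now I would test the identity $\langle Lp,q\rangle = \langle p,Lq\rangle$ on well-chosen monomials. First take $p=1$ and $q$ homogeneous of degree $n\ge 2$. The right-hand side vanishes by the observation above, while the left-hand side is $\langle f_0, q\rangle = \langle g_n^{(0)}, q\rangle$ by orthogonality. Letting $q$ range over all homogeneous polynomials of degree $n$ (in particular $q=g_n^{(0)}$) forces $g_n^{(0)}=0$ for all $n\ge 2$, so $f_0$ has degree at most $1$. Next take $p=z_k$ and $q$ homogeneous of degree $n\ge 3$: again $\langle z_k, Lq\rangle=0$, while $Lz_k = f_0 z_k + f_{e_k}$ has degree-$n$ part $g_{n-1}^{(0)}z_k + g_n^{(e_k)}$. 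Using the already established $g_{n-1}^{(0)}=0$ for $n\ge 3$, symmetry then forces $\langle g_n^{(e_k)}, q\rangle=0$ for all homogeneous $q$ of degree $n$, hence $g_n^{(e_k)}=0$ for $n\ge 3$. This is the desired conclusion.

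The only real obstacle is bookkeeping of degree shifts, together with the implicit assumption that each homogeneous polynomial of degree $n$ lies in $A^2_\xi(D)$ (so the inner product $\langle g_n^{(\alpha)}, g_n^{(\alpha)}\rangle$ makes sense and is strictly positive unless $g_n^{(\alpha)}=0$); this is standard for weighted Bergman spaces on bounded domains. Everything else is a direct orthogonality argument using the hypothesis cited from \cite{Hua1963, Koranyi1965}.
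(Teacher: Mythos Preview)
Your proposal is correct and follows essentially the same approach as the paper's proof: both test symmetry first against $p=1$ and monomials/homogeneous polynomials $q$ of degree $\geq 2$ to bound $\deg f_0$, then against $p=z_k$ and $q$ of degree $\geq 3$ to bound $\deg f_{e_k}$, using orthogonality of distinct homogeneous degrees throughout. Your write-up is a bit more explicit about the homogeneous decomposition of the $f_\alpha$, whereas the paper simply observes that $Lq$ has no components of low enough degree and invokes the monomial basis, but the underlying argument is the same.
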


\begin{proof}
  Let $p=1$, and let $q$ be a monomial of degree two or higher.
  Then $Lp=f_0$ so $\langle Lp,q\rangle = \langle f_0,q\rangle$.
  On the other hand $Lq$ is a polynomial of degree one or higher,
  so $\langle p,Lq\rangle = 0$. Since the monomials form a basis
  for $A^2_\xi(D)$ (not necessarily orthogonal), and $\langle f_0,q\rangle=0$
  for any monomial of degree greater than or equal to two,
  $f_0$ has degree less than or equal to one.
  
  Repeating the argument with $p=z^\alpha$ of degree one
  and $q$ of degree three or higher,
  we can conclude that $f_\alpha$ has degree two or less.
  We get that $Lp=f_0z^\alpha+f_\alpha$, and since $f_0z^\alpha$ has degree
  two or less, then $\langle Lp,q \rangle = \langle f_\alpha,q\rangle$.
  Moreover, $Lq$ has degree two or higher, so $\langle p,Lq\rangle =0$,
  and this concludes the argument.
\end{proof}

\subsection{Main results}
We finally have sufficient background to state our main results.
Moving forward, the domain $D$ will be either the
unit ball $\mathbb{B}^N=\{ z\in\mathbb{C}^N : |z|<1\}$ or
the domain $\mathbb{D}^2$ of $2\times 2$ complex matrices
$Z$ for which $I-Z^*Z>0$. Let
$A^2_\xi(D)$ be the corresponding Bergman space.
$G$ will be the group $SU(N,1)$ in the case of $\mathbb{B}^N$
and $SU(2,2)$ in the case of $\mathbb{D}^2$. Lastly, $\pi_\xi$ will be the
corresponding discrete series (projective) representation
of the group $G$ whose Lie algebra will be denoted $\mathfrak{g}$.
We seek to prove the following:

\begin{theorem}\label{mainthm}
  For the two domains $\mathbb{B}^N$ and $\mathbb{D}^2$,
  the closure of any first
  order self-adjoint differential
  operator on $A^2_\xi(D)$ is equal to $c+i\overline{\pi}_\xi(X)$
  for some real $c$ and some $X\in\mathfrak{g}$.
\end{theorem}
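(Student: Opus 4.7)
The plan is to realize each operator $i\pi_\xi(X)$ explicitly as a first-order differential operator with polynomial coefficients, and then to match a given symmetric $L$ against these by peeling off its homogeneous components in decreasing order.

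First, I would compute $\pi_\xi(X)$ explicitly for $X$ in a basis of $\mathfrak{g}$ adapted to the Cartan decomposition $\mathfrak{g}=\mathfrak{k}\oplus\mathfrak{p}^+\oplus\mathfrak{p}^-$, by differentiating the formula for $\pi_\xi(\exp(tX))f$ at $t=0$. A direct calculation shows that $\pi_\xi(X)$ is a first-order differential operator whose first-order coefficients are polynomials of degree $0$, $1$, or $2$ according as $X$ lies in $\mathfrak{p}^+$, $\mathfrak{k}$, or $\mathfrak{p}^-$, and whose zero-order coefficient is correspondingly of degree $0$, $0$, or $1$. By the general theory recalled in the background, each $i\pi_\xi(X)$ is symmetric on the polynomial domain and its closure $i\overline{\pi}_\xi(X)$ is self-adjoint.

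Second, let $L=f_0+\sum_{|\alpha|=1}f_\alpha\partial^\alpha$ be self-adjoint. By Proposition \ref{prop1}, $\deg f_0\le 1$ and $\deg f_\alpha\le 2$. I would peel off homogeneous pieces: choose $X_-\in\mathfrak{p}^-$ so that $i\pi_\xi(X_-)$ reproduces the degree-two part of the $f_\alpha$; then $X_0\in\mathfrak{k}$ to absorb the remaining degree-one part of the $f_\alpha$; then $X_+\in\mathfrak{p}^+$ to absorb the constant part of the $f_\alpha$. Subtracting a symmetric operator from a symmetric operator gives a symmetric operator, so each residue remains symmetric. What is left after all three subtractions is a zero-order operator: multiplication by a polynomial $g$ of degree $\le 1$. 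Symmetry of this residue gives $\int (g-\overline{g})\,p\,\overline{q}\,h^\xi\,dZ=0$ for all polynomials $p,q$; circular invariance of $D$ then makes the holomorphic, antiholomorphic, and scalar pieces of $g-\overline{g}$ pairwise orthogonal against appropriate $p\overline{q}$, forcing $g=c$ for some real $c$.

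Third, on the polynomial domain we then have $L=c+i\pi_\xi(X_-+X_0+X_+)$. Since $i\pi_\xi$ of any Lie algebra element is essentially self-adjoint and $L$ itself is assumed self-adjoint, uniqueness of self-adjoint extensions forces $\overline{L}=c+i\overline{\pi}_\xi(X_-+X_0+X_+)$, which is the desired conclusion with $X=X_-+X_0+X_+$.

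\medskip

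\noindent\textbf{Main obstacle.} The surjectivity claimed in the peeling steps is the delicate point: one must verify that the constraints imposed by self-adjointness on the polynomial coefficients leave exactly as much room as is parameterized by the corresponding component of $\mathfrak{g}$. For $\mathbb{B}^N$ this amounts to a concrete $(N^2+2N)$-parameter check compatible with $\dim\mathfrak{su}(N,1)$. For $\mathbb{D}^2$ the coefficients are matrix-valued, the target dimension is $\dim\mathfrak{su}(2,2)=15$, and matching the self-adjointness constraints on the quadratic coefficients against the structure of $\mathfrak{p}^-$ is where the bulk of the work will sit.
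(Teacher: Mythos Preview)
Your overall strategy---compute the Lie algebra action explicitly, then match coefficients---is the same as the paper's. The gap is in the peeling mechanism. You write ``choose $X_-\in\mathfrak{p}^-$ so that $i\pi_\xi(X_-)$ reproduces the degree-two part of the $f_\alpha$'' and then claim the residue stays symmetric. But $\mathfrak{p}^-$ is a subspace of $\mathfrak{g}_{\mathbb{C}}$, not of the real form $\mathfrak{g}$; in fact $\mathfrak{p}^-\cap\mathfrak{g}=\{0\}$. Hence $i\pi_\xi(X_-)$ is \emph{not} symmetric for nonzero $X_-$, and the residue $L-i\pi_\xi(X_-)$ is not symmetric either. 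If instead you peel with a real $X_{\mathfrak p}\in\mathfrak{p}$, then $i\pi_\xi(X_{\mathfrak p})$ contributes simultaneously to the degree-two \emph{and} degree-zero parts of the $f_\alpha$ (and to the linear part of $f_0$); you cannot strip them independently. So after matching the quadratic part and then the $\mathfrak{k}$-part, you have no Lie algebra freedom left to absorb whatever constant parts of the $f_\alpha$ remain---unless you already know that symmetry forces the quadratic and constant parts of the $f_\alpha$ to be linked exactly as $\mathfrak{p}$ links them.

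This is precisely what the paper establishes first, and it is where the real work sits. Propositions~\ref{FODO} and~\ref{LforGenDisk} compute, from $\langle Lz^n,z^m\rangle=\langle z^n,Lz^m\rangle$ for low-degree $n,m$, the exact relations among the coefficients (e.g.\ $a_{e_k}^{2e_k}=\overline{a_{e_k}^0}$, $a_0^{e_k}=(N{+}\xi{+}1)\overline{a_{e_k}^0}$, and in the $\mathbb{D}^2$ case the extra constraints $a_{e_i}^{e_{5-i}}=0$, $a_{e_i}^{e_j}=a_{e_{5-j}}^{e_{5-i}}$, $a_{e_1}^{e_1}+a_{e_4}^{e_4}=a_{e_2}^{e_2}+a_{e_3}^{e_3}$ coming from the non-orthogonality $\langle z^{e_1+e_4},z^{e_2+e_3}\rangle\neq 0$). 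Only after these relations are in hand is the match with $\mathfrak{g}$ a finite linear-algebra check. Your ``Main obstacle'' paragraph gestures at this, but it is not a dimension count: the specific algebraic identities forced by symmetry are what make the surjectivity go through, and for $\mathbb{D}^2$ deriving them requires the explicit inner products of degree-two monomials. Your final step (the zero-order residue must be a real constant) and the essential self-adjointness argument are fine once the matching is done.
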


In the case of the unit ball, we also show that
\begin{theorem}\label{euler-isometry}
  The operator $\widetilde{L}:=\sum_{j=1}^Nz_j\frac{\partial}{\partial z_j}+c$ with $c$ not being 0 or a negative integer 
  extends to a linear homeomorphism between $\mathcal{A}_\xi^2(\mathbb{B}^N)$ and $\mathcal{A}_{\xi+2}^2(\mathbb{B}^N)$.
\end{theorem}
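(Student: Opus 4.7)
The plan is to diagonalize $\widetilde{L}$ on the orthogonal basis of monomials and reduce the claim to a uniform two-sided bound on an eigenvalue ratio.

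First I would recall that $\{z^\alpha\}_{\alpha\in\mathbb{N}^N}$ is a complete orthogonal system in $A^2_\xi(\mathbb{B}^N)$ (the same orthogonality underlying Proposition~\ref{prop1}), and compute the monomial norms by polar decomposition on $\mathbb{B}^N$:
\[
\|z^\alpha\|_\xi^2 \;=\; C_{N,\xi}\,\frac{\alpha!}{\Gamma(N+|\alpha|+\xi+1)},
\]
for a positive constant $C_{N,\xi}$. Taking the ratio of norms across the two weights,
\[
\frac{\|z^\alpha\|_{\xi+2}^2}{\|z^\alpha\|_\xi^2} \;=\; \frac{K_{N,\xi}}{(N+|\alpha|+\xi+1)(N+|\alpha|+\xi+2)}
\]
for a positive constant $K_{N,\xi}$.

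Next I would use that $\sum_j z_j\partial_j$ is the Euler field, so $\widetilde{L}$ is diagonal on the monomial basis with
\[
\widetilde{L}(z^\alpha) \;=\; (|\alpha|+c)\,z^\alpha.
\]
Combining the last two displays,
\[
\|\widetilde{L} z^\alpha\|_{\xi+2}^2 \;=\; \lambda_\alpha\,\|z^\alpha\|_\xi^2,\qquad \lambda_\alpha \;:=\; \frac{K_{N,\xi}\,(|\alpha|+c)^2}{(N+|\alpha|+\xi+1)(N+|\alpha|+\xi+2)}.
\]
As $|\alpha|\to\infty$ one has $\lambda_\alpha\to K_{N,\xi}>0$, and the hypothesis $c\notin\{0,-1,-2,\dots\}$ ensures $|\alpha|+c\neq 0$ for every multi-index $\alpha$. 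Hence $0<\inf_\alpha\lambda_\alpha\le\sup_\alpha\lambda_\alpha<\infty$.

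Finally, since the monomials are orthogonal in both Hilbert spaces, expanding $f=\sum_\alpha a_\alpha z^\alpha$ and invoking Parseval turns the bound on $\lambda_\alpha$ into $c_1\|f\|_\xi\le\|\widetilde{L} f\|_{\xi+2}\le c_2\|f\|_\xi$ on the dense subspace of polynomials. This extends $\widetilde{L}$ uniquely to a bounded linear injection $A^2_\xi(\mathbb{B}^N)\to A^2_{\xi+2}(\mathbb{B}^N)$; the inverse acts as multiplication by $(|\alpha|+c)^{-1}$ on each monomial and is bounded by the lower bound on $\lambda_\alpha$, producing a linear homeomorphism. The only mildly technical step I foresee is the bookkeeping with the Gamma-function normalizations; the essential content lies in the hypothesis on $c$, which is precisely what keeps $\lambda_\alpha$ bounded away from zero at small $|\alpha|$ (if $c=-|\alpha_0|$ for some multi-index, $\widetilde{L}$ would annihilate that monomial and fail to be injective).
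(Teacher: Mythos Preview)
Your proposal is correct and is essentially the same argument as the paper's: diagonalize $\widetilde{L}$ on the monomial basis, compute the ratio $\|z^\alpha\|_{\xi+2}^2/\|z^\alpha\|_\xi^2$ via the Gamma-function norm formula, observe that $(|\alpha|+c)^2/[(N+\xi+1+|\alpha|)(N+\xi+2+|\alpha|)]$ is bounded above and below by positive constants (using $c\notin\{0,-1,-2,\dots\}$), and conclude by Parseval. The only cosmetic difference is that the paper works directly with the expansion of a general $f\in\mathcal{A}^2_\xi$ rather than first establishing the bounds on polynomials and then extending by density.
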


\section{Verification of results}
We will use the fact that monomials of differing homogeneous degrees
are orthogonal to derive the form of the first order differential operator
$L = f_0+\sum_{|\alpha|=1} f_\alpha \partial^\alpha$. In fact, we will only
need monomials of order 2 or less to show that $L$ agrees
with $c+i\pi(X)$ for some $c\in\mathbb{R}$ and $X\in\mathfrak{g}$.
We know all such operators are symmetric on $H_\pi^\infty$ which contains
polynomials, and therefore the symmetry of $L$ restricted to
polynomials follows automatically. This saves a lot of time
over the approach utilized in \cite{Villone1970} for
the case of the unit disk.

\subsection{General observations}

Later we will need the following characterization of
the smooth vectors for $\pi_\xi$ due to \cite{Chebli2004}.
Notice that any holomorphic function $f$ can be decomposed as
$f = \sum_{k=0}^\infty f_k$ where $f_k$ is a homogeneous polynomial of degree $k$.
\begin{theorem}
  The smooth vectors for $\pi_\xi$
  are the holomorphic functions $f=\sum_{k=0}^\infty f_k$ for which
  for any $n$ there is a constant $C_n>0$ such that 
  the norm   $\| f_k\|_\xi < C_n(1+k)^{-n}$ for all $k=0,1,2,\dots$.
\end{theorem}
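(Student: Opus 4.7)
The plan is to identify the homogeneous decomposition $f=\sum f_k$ with the spectral decomposition under the central circle of $K$, handle the necessity direction by iterating the corresponding infinitesimal generator, and then bootstrap from $K$-smoothness to full $G$-smoothness via polynomial bounds on how $\mathfrak{g}$ acts on each homogeneous piece.

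Concretely, one first picks out the circle $T\subset K$ consisting of the central elements $\mathrm{diag}(e^{iM\theta}I_N,e^{-iN\theta}I_M)$. A direct computation using the formula for $\pi_\xi$ shows that $\pi_\xi(k_\theta)f_k = e^{-i(k(N+M)+MN(\xi+N+M))\theta}f_k$, so that each homogeneous component $f_k$ is an eigenvector of the circle action and the decomposition $f=\sum f_k$ is literally the $T$-Fourier decomposition of $f$. For the necessity direction, let $Z_0\in\mathfrak{g}$ generate this circle; if $f\in H_{\pi_\xi}^\infty$ then $\opi_\xi(Z_0)^n f\in H$ for every $n$, and since the $f_k$ are orthogonal one obtains
\[
\|\opi_\xi(Z_0)^n f\|_\xi^2 \;=\; \sum_{k=0}^\infty (k(N+M)+MN(\xi+N+M))^{2n}\|f_k\|_\xi^2 < \infty,
\]
which forces $\|f_k\|_\xi = O((1+k)^{-n})$ for every $n$.

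For the sufficiency direction one exploits the Harish-Chandra decomposition $\mathfrak{g}_{\mathbb{C}} = \mathfrak{k}_{\mathbb{C}}\oplus\mathfrak{p}^+\oplus\mathfrak{p}^-$. The key structural observation is that $\mathfrak{k}$ preserves each homogeneous piece $V_k$, while $\mathfrak{p}^+$ maps $V_k$ into $V_{k+1}$ (essentially by multiplication by linear coordinates, possibly with a weight) and $\mathfrak{p}^-$ maps $V_k$ into $V_{k-1}$ (by a first-order differential operator plus a correction coming from the cocycle $\det(cZ+d)^{-(\xi+N+M)}$). A careful bookkeeping of the $\xi$-dependent constants shows that each operator $\opi_\xi(X)$ for $X\in\mathfrak{g}$ has operator norm on $V_k$ bounded by $C_X(1+k)^{m_X}$ for some fixed exponent $m_X$, and consequently, for every $D\in U(\mathfrak{g})$ of degree $d$, there exist $C_D,M_D$ with $\|\opi_\xi(D)f_k\|_\xi \le C_D(1+k)^{M_D}\|f_k\|_\xi$. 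Combined with the hypothesized rapid decay of $\|f_k\|_\xi$, this yields $f\in\mathrm{Dom}(\opi_\xi(D))$ for every $D\in U(\mathfrak{g})$; then a standard result (e.g.\ Poulsen, or the classical argument identifying $C^\infty$-vectors with $\bigcap_D\mathrm{Dom}(\opi_\xi(D))$) upgrades this to genuine smoothness of $x\mapsto \pi_\xi(x)f$.

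The main obstacle is the sufficiency step: establishing the polynomial-in-$k$ bounds on $\opi_\xi(X)|_{V_k}$ uniformly across all $X\in\mathfrak{g}$, particularly for the noncompact directions $\mathfrak{p}^\pm$ in the rank-2 case $\mathbb{D}^2$, where the action of $\mathfrak{p}^+$ and $\mathfrak{p}^-$ intertwines the $K$-types in $V_k$ (labeled by two-row signatures) with factors expressible via Pochhammer symbols; verifying that none of these factors escapes polynomial growth requires the combinatorial estimates worked out in \cite{Chebli2004}.
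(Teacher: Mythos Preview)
The paper does not prove this theorem; it is quoted from \cite{Chebli2004} and used as a black box, so there is no in-paper argument to compare your proposal against.

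That said, your outline is the standard one and is essentially how the result is proved in the cited reference. The necessity direction via the central circle of $K$ is exactly right: the homogeneous decomposition coincides with the isotypic decomposition for this torus, and iterating the generator $Z_0$ forces rapid decay of $\|f_k\|_\xi$. For sufficiency, reducing to polynomial operator-norm bounds $\|\opi_\xi(X)|_{V_k}\|\le C_X(1+k)^{m_X}$ and then concluding $f\in\bigcap_{D\in U(\mathfrak{g})}\mathrm{Dom}(\opi_\xi(D))$ is the correct route. One small caution: the identification of this intersection with $H^\infty_{\pi_\xi}$ is not literally Poulsen's theorem; the cleanest way to finish is to note that the polynomial partial sums $p_n=\sum_{k\le n}f_k$ are smooth vectors and, under your bounds, converge to $f$ in the Fr\'echet topology of $H^\infty_{\pi_\xi}$, which is complete.

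Where your proposal stops being a proof is its last sentence: you defer the polynomial-growth estimate back to \cite{Chebli2004}, which is the source of the very theorem you are proving. For the rank-one case this circularity is easily removed, since the norm formula $\|z^n\|_\xi^2=\Gamma(N+\xi+1)\,n!/\Gamma(N+\xi+1+|n|)$ gives directly
\[
\frac{\|z_j z^n\|_\xi^2}{\|z^n\|_\xi^2}=\frac{n_j+1}{N+\xi+1+|n|}\le 1,
\qquad
\frac{\|\partial_j z^n\|_\xi^2}{\|z^n\|_\xi^2}=n_j(N+\xi+|n|)=O(k^2),
\]
and the monomials are an orthogonal basis of each $V_k$, so the operator norms follow. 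For $\mathbb{D}^2$ the analogous estimate genuinely requires work with the $K$-type structure, and as written your proposal is an outline that still leans on the reference rather than an independent proof.
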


This classification can be used to verify the following result, which
we have not found a reference for, and we therefore include a proof.

\begin{proposition}
  The polynomials are a core (essential domain) for $\pi_\xi(X)$.
\end{proposition}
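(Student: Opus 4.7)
The plan is to show directly that every smooth vector for $\pi_\xi$ can be approximated by polynomials in the graph norm of $\pi_\xi(X)$. By construction, $\overline{\pi}_\xi(X)$ is the closure of $\pi_\xi(X)$ on $H^\infty_\pi$, so the smooth vectors already form a core, and it suffices to approximate each smooth vector $f$ by polynomials in this graph norm. Writing $f=\sum_{k=0}^\infty f_k$ in its homogeneous decomposition, the preceding theorem gives rapid decay $\|f_k\|_\xi \leq C_n(1+k)^{-n}$ for every $n$, so the partial sums $f^{(N)} := \sum_{k=0}^N f_k$ are polynomials converging to $f$ in $A^2_\xi$. The remaining task is to prove $\pi_\xi(X) f^{(N)} \to \pi_\xi(X) f$ in $A^2_\xi$.

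Differentiating $\pi_\xi(x)g(Z) = \det(cZ+d)^{-(\xi+N+M)} g(x^{-1}\cdot Z)$ at the identity along $X \in \mathfrak{g}$ shows that $\pi_\xi(X)$ is of the form $a_X(Z) + \sum_j b_{X,j}(Z)\partial_j$, where $a_X$ and $b_{X,j}$ are polynomials in the entries of $Z$ with degree bounded by a constant depending only on $X$. Consequently $\pi_\xi(X)$ sends $\mathcal{P}_k$, the space of homogeneous polynomials of degree $k$, into the finite sum $\mathcal{P}_{k-1} \oplus \cdots \oplus \mathcal{P}_{k+m}$ with $m$ independent of $k$. The crucial ingredient is then a polynomial-in-$k$ norm estimate
$$\|\pi_\xi(X) f_k\|_\xi \leq C(1+k)^s \|f_k\|_\xi \qquad (f_k \in \mathcal{P}_k),$$
with constants independent of $k$. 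For $\mathbb{B}^N$ this is elementary: monomials are orthogonal and $\|z^\alpha\|_\xi^2$ is a ratio of Gamma functions, from which one sees that multiplication by a coordinate is $A^2_\xi$-bounded uniformly in $k$, while $\partial_j$ restricted to $\mathcal{P}_k$ has operator norm $O(k)$. For $\mathbb{D}^2$ the same estimate should follow by decomposing $\mathcal{P}_k$ into its $K$-isotypic components indexed by partitions and computing norms of matrix-entry multiplications and partial derivatives on each piece.

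Combining the estimate with rapid decay,
$$\|\pi_\xi(X)(f - f^{(N)})\|_\xi \leq \sum_{k>N} \|\pi_\xi(X) f_k\|_\xi \leq C \sum_{k>N} (1+k)^s \|f_k\|_\xi \longrightarrow 0,$$
so $\{\pi_\xi(X) f^{(N)}\}$ is Cauchy. Since $f^{(N)} \to f$ and $\overline{\pi}_\xi(X)$ is closed with $f \in H^\infty_\pi \subseteq \mathcal{D}(\overline{\pi}_\xi(X))$, the limit must be $\pi_\xi(X) f$, and hence the polynomials form a core. I expect the main obstacle to be the polynomial-in-$k$ norm estimate: the principle is straightforward, but the computation requires unpacking the Bergman inner products of monomials, which is routine for $\mathbb{B}^N$ and more delicate for $\mathbb{D}^2$ because of the $K$-isotypic bookkeeping.
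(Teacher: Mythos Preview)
Your approach mirrors the paper's: both take the partial sums $p_n=\sum_{k\le n}f_k$ of the homogeneous decomposition and argue that $\pi_\xi(X)(f-p_n)\to 0$ in norm. The paper observes that $\pi_\xi(X)$ is a first-order operator with polynomial coefficients (so degrees shift by at most one), that $\pi_\xi(X)(f-p_n)$ is again a smooth vector with no homogeneous components below degree $n$, and then appeals to the rapid-decay characterization of smooth vectors to conclude. You instead isolate an explicit operator bound $\|\pi_\xi(X)f_k\|_\xi\le C(1+k)^s\|f_k\|_\xi$ on each homogeneous piece and combine it with the rapid decay of $\|f_k\|_\xi$.

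The two arguments are essentially the same, but yours is more transparent about the technical content. In fact the paper's last step---that the tail $\sum_{k\ge n}g_k$ tends to zero ``by the same argument as before''---is not quite complete as stated, because the smooth-vector constants for $\pi_\xi(X)(f-p_n)$ a~priori depend on $n$; one still has to control the finitely many boundary components at degrees $n$ and $n+1$, and that is exactly where your polynomial-in-$k$ estimate enters. So your formulation is the cleaner one. Your caveat about the $\mathbb{D}^2$ case is apt: the principle is identical, but monomials there are not orthogonal, so the estimate has to be checked on $K$-isotypic pieces rather than on individual monomials.
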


\begin{proof}
  We need to verify that if $A$ is $\pi_\xi(X)$ restricted to the polynomials,
  then $\overline{A}$ is the same as $\overline{\pi}_\xi(X)$.
  This is the same as showing that if $f\in H_{\pi_\xi}^\infty$, then
  there is a sequence of polynomials $p_n$ converging to $f$
  for which $Ap_n=\pi(X)p_n$ converges of $\pi_\xi(X)f$.
  
  First note that due to the classification of smooth vectors,
  if $f=\sum_k f_k$, then the series is absolutely convergent in
  $H$ ($\sum_{k=0}^\infty \| f_k\| \leq C \sum_{k=0}^\infty (1+k)^{-2}$)
  and therefore it converges in $H$.
  This means that $p_n = \sum_{k=0}^n f_k$ converges to $f$.
  Now, $f-p_n$ is also a smooth vector, and $\pi_\xi(X)(f-p_n)$ will
  be a smooth vector. 
  
  Notice that
  $$\pi_\xi(X)f(Z) =
  \frac{d}{dt}\Big|_{t=0} \pi_\xi(e^{tX})f(Z)
  =\frac{d}{dt}\Big|_{t=0} \det(c(t)Z+d(t))^{-(\xi+N+M)}f(e^{-tX}\cdot Z)
  $$
  where $e^{-tX} =
  \begin{bmatrix}
    a(t) & b(t) \\ c(t) & d(t)
  \end{bmatrix}
  $.
  The derivative of $\det(cZ+d)^{-(\xi+N+M)}$ at $t=0$ is a polynomial
  in the entries of $Z$, and therefore the product rule show that
  $\pi_\xi(X)$ will be of the form
  $$
  \pi_\xi(X) = p_0 + \sum_{|\alpha|=1} p_\alpha\partial^\alpha
  $$
  where $p_0$ and $p_\alpha$ are polynomials in the entries of $Z$
  and $\partial^\alpha$ is a partial derivative
  (of order 1) in the coordinates of $Z$.

  Therefore $\pi_\xi(X)(f-p_n)$ cannot contain any power less than $n$,
  i.e. $\pi_\xi(X)(f-p_n) = \sum_{k\geq n} g_k$ where $g_k$ is a polynomial of
  homogeneous degree $k$. The series $\sum_{k\geq n} g_k$ converges absolutely
  (same argument as before), so $\pi_\xi(X)(f-p_n)$ converges to $0$
  which finishes the proof.
\end{proof}

\subsection{The case of the Unit Ball}

	We will denote tuples $z\in\mathbb{C}^N$ as $z=(z_1,\cdots,z_N)$.
	Let $H(\mathbb{B}^N)$ be the space of all holomorphic functions on the unit ball
	\begin{align}
		\mathbb{B}^N
		&=\{z\in\mathbb{C}^N\,\mid\, |z|^2:=|z_1|^2+\cdots+|z_N|^2<1\}. \nonumber
	\end{align}
	As mentioned in \cite{Christensen2017,Zhu2000}, we can identify $\mathbb{B}^N$ with the unit ball in $\mathbb{R}^{2N}$, and thus equip the measure $\mathrm{d}\nu=2Nr^{2N-1}\,\mathrm{d}r\mathrm{d}\pi_N$, where $\mathrm{d}\pi_N$ is the rotation-invariant surface measure on the sphere $\mathbb{S}^{2N-1}\subseteq\mathbb{R}^{2N}$ normalized by $\pi_N(\mathbb{S}^{2N-1})=1$.
	Define the weighted measure 
	\begin{align}
		\mathrm{d}\nu_\xi(z):=\frac{\Gamma(N+\xi+1)}{N!\Gamma(\xi+1)}(1-|z|^2)^\xi\,\mathrm{d}\nu(z). \nonumber
	\end{align}
	For $\xi>-1$, this is a probability measure and therefore
	the weighted Bergman space defined by
	\begin{align}
	  \mathcal{A}_\xi^p(\mathbb{B}^N):=\left\{ f\in H(\mathbb{B}^N)\,\bigg|\,\|f\|_{\mathcal{A}_\xi^p}:=\left(\int_{\mathbb{D}}|f(z)|^p\,\mathrm{d}\nu_\xi(z)\right)^{1/p} <\infty\right\} \nonumber
	\end{align}
	is non-trivial for $\xi>-1$.
	
	In this paper we will mostly focus on the Hilbert space
	$\mathcal{A}^2_\xi(\mathbb{B}^N)$ with inner product
	$$
	\ip{f}{g}_{\xi} := \int_{\mathbb{B}^N} f(z)\overline{g(z)}\,\mathrm{d}\nu_\xi(z).
	$$
	The monomials $z^n$ form an orthogonal basis, where $\abs{n}=\sum_{k=1}^Nn_k$ and $n!=\prod_{k=1}^Nn_k!$ for $n=(n_1,\cdots,n_N)\in\mathbb{N}_0^N$.
	We also denote the norm on this space by $\|\cdot\|_\xi$, and make note that 
	\begin{align}
		\|z^n\|_\xi^2
		&=\frac{\Gamma(N+\xi+1)n!}{\Gamma(N+\xi+1+\abs{n})}. \nonumber
	\end{align}

	\begin{proposition} \label{FODO}
		Let $f_\alpha\in\mathcal{A}_\xi^2(\mathbb{B}^N)$ be denoted $f_\alpha(z)=\sum_{\beta\in\mathbb{N}_0^N}a_\alpha^\beta z^\beta$ where $\alpha\in\mathbb{N}^N$.
		The operator $L=f_0+\sum_{k=1}^N f_{e_k} \frac{\partial}{\partial z_k}$ with domain $\mathcal{D}(L)=P(\mathbb{B}^N)$ is symmetric if and only if 
		\begin{align}
				f_0&=a_0^0+(N+\xi+1)\sum_{j=1}^N \overline{a_{e_j}^0}z_j, \label{form1} \\
				f_{e_k}&=a_{e_k}^0+\sum_{j=1}^N (a_{e_k}^{e_j} z_j+\overline{a_{e_j}^0}z_jz_k) \label{form2}
		\end{align}
		where the coefficients satisfy 
		\begin{align}
                  a_0^0,a_{e_k}^{e_k}\in\mathbb{R}, \qqquad a_0^{e_k}=(N+\xi+1)\overline{a_{e_k}^0}, \qqquad a_{e_k}^{e_j}=\overline{a_{e_j}^{e_k}}. \label{relations1}
		\end{align}
	\end{proposition}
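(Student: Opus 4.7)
The plan is to combine Proposition \ref{prop1}, which restricts the degrees of the coefficient functions, with a direct computation of the formal adjoints of $\partial_k$ and $z_j\partial_k$ in $\mathcal{A}_\xi^2(\mathbb{B}^N)$. The necessity direction extracts the coefficient relations by testing $\ip{Lp}{q}_\xi=\ip{p}{Lq}_\xi$ on a short list of low-degree monomial pairs; the sufficiency direction follows from a clean decomposition $L=a_0^0\,\mathrm{Id}+(A+A^*)+R$ in which every summand is manifestly self-adjoint.

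For the necessity direction, Proposition \ref{prop1} guarantees that $f_0$ has degree at most $1$ and $f_{e_k}$ has degree at most $2$, so $L$ sends a homogeneous polynomial of degree $d$ into the span of homogeneous polynomials of degrees $d-1$, $d$, and $d+1$. Since monomials of distinct homogeneous degree are orthogonal in $\mathcal{A}_\xi^2(\mathbb{B}^N)$, the symmetry condition is automatic on a monomial pair $(p,q)$ whenever $|\deg p-\deg q|>1$. I would therefore test only on the pairs $(1,1)$, $(1,z_k)$, $(z_j,z_k)$, $(z_m,z_j^2)$, and $(z_m,z_jz_k)$ with $j\neq k$, evaluating each inner product via the explicit formula
\[
\|z^n\|_\xi^2=\frac{\Gamma(N+\xi+1)\,n!}{\Gamma(N+\xi+1+|n|)}.
\]
The first pair forces $a_0^0\in\mathbb{R}$; the second forces $a_0^{e_k}=(N+\xi+1)\overline{a_{e_k}^0}$; the third forces the Hermitian relation $a_{e_j}^{e_k}=\overline{a_{e_k}^{e_j}}$; and once these are substituted into the last two, the quadratic part of $f_{e_m}$ is forced to consist only of the monomials $z_jz_m$ with coefficients $\overline{a_{e_j}^0}$, producing (\ref{form2}).

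For the sufficiency direction, a short calculation on monomials using the same norm formula gives
\[
(\partial_k)^*\,z^\beta=(N+\xi+1+|\beta|)\,z^{\beta+e_k}=\bigl[(N+\xi+1)\,z_k+z_k E\bigr]z^\beta,
\]
where $E=\sum_l z_l\partial_l$ is the Euler operator, and similarly $(z_j\partial_k)^*=z_k\partial_j$. Substituting (\ref{form1}) and (\ref{form2}) into $L$ and regrouping, the operator collapses into
\[
L=a_0^0\,\mathrm{Id}+(A+A^*)+R,\qquad A:=\sum_k a_{e_k}^0\partial_k,\quad R:=\sum_{j,k}a_{e_k}^{e_j}\,z_j\partial_k.
\]
The scalar piece is self-adjoint because $a_0^0$ is real; $A+A^*$ is self-adjoint by construction; and $R^*=\sum_{j,k}\overline{a_{e_k}^{e_j}}\,z_k\partial_j$ coincides with $R$ exactly under the Hermitian condition in (\ref{relations1}).

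The main obstacle is spotting that the somewhat opaque combination $(N+\xi+1)z_k+z_kE$ implicit in (\ref{form2}) is nothing other than the formal adjoint of $\partial_k$ on $\mathcal{A}_\xi^2(\mathbb{B}^N)$. Once this identification is made, both the necessity computations and the sufficiency decomposition reduce to routine index juggling; the only delicate step left is handling the combinatorial factor of $2$ from $\partial_j(z_j^2)$ together with the modified norm $\|z_j^2\|_\xi^2$ in the $(z_m,z_j^2)$ test, which must collapse to the same relation as the $(z_m,z_jz_k)$ test via the already-derived identity $a_0^{e_j}=(N+\xi+1)\overline{a_{e_j}^0}$.
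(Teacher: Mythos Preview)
Your proof is correct. The necessity half parallels the paper's approach: both test symmetry on low-degree monomial pairs, though the paper organizes the computation by writing the whole polynomials $f_0=Lz^0$ and $f_\gamma=(L-f_0)z^\gamma$ and simplifying via the general symmetry identity, rather than extracting coefficients one pair at a time as you do.

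Where you genuinely diverge is in the sufficiency direction. The paper verifies $\langle Lz^n,z^m\rangle_\xi=\langle z^n,Lz^m\rangle_\xi$ directly for every relevant monomial pair $(z^n,z^m)$ with $\bigl||n|-|m|\bigr|\le 1$, an explicit case-by-case check. You instead compute the formal adjoints $(\partial_k)^*=(N+\xi+1)z_k+z_kE$ and $(z_j\partial_k)^*=z_k\partial_j$ once and decompose $L=a_0^0\,\mathrm{Id}+(A+A^*)+R$, with each summand manifestly symmetric. This is more conceptual---it explains \emph{why} the relations in (\ref{relations1}) are exactly the symmetry constraints, rather than merely confirming them---and it would scale more gracefully to other domains with orthogonal homogeneous components. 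The paper's approach is more elementary and self-contained but heavier on index bookkeeping.
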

		\begin{proof}
			Assume that $L$ is symmetric.
			It is enough to work with the monomials, since they form an orthogonal basis for $P(\mathbb{B}^N)$.
			For any $n\in\mathbb{N}_0^N$,
			\begin{align}
				Lz^n
				&=\left(\sum_{\beta\in\mathbb{N}_0^N}a_0^\beta z^\beta+\sum_{\abs{\alpha}=1}\sum_{\beta\in\mathbb{N}_0^N}a_\alpha^\beta z^\beta \partial^\alpha\right)z^n \nonumber \\
				&=\sum_{\beta\in\mathbb{N}_0^N}a_0^\beta z^{\beta+n}+\sum_{\abs{\alpha}=1}\sum_{\beta\in\mathbb{N}_0^N}a_\alpha^\beta(\alpha\cdot n)z^{\beta+n-\alpha}. \nonumber
			\end{align}
			Then, we have for all $m\in\mathbb{N}_0^N$ that
			\begin{align}
				\ip{Lz^n}{z^m}_\xi
				&=\sum_{\beta\in\mathbb{N}_0^N}a_0^\beta\ip{z^{\beta+n}}{z^m}_\xi+\sum_{\abs{\alpha}=1}\sum_{\beta\in\mathbb{N}_0^N}a_\alpha^\beta(\alpha\cdot n)\ip{z^{\beta+n-\alpha}}{z^m}_\xi \nonumber
			\end{align}
			and that
			\begin{align}
				\ip{z^n}{Lz^m}_\xi 
				&=\overline{\ip{Lz^m}{z^n}}_\xi \nonumber \\
				&=\sum_{\beta\in\mathbb{N}_0^N}\overline{a_0^\beta\ip{z^{\beta+m}}{z^n}_\xi}+\sum_{\abs{\alpha}=1}\sum_{\beta\in\mathbb{N}_0^N}\overline{a_\alpha^\beta(\alpha\cdot m)\ip{z^{\beta+m-\alpha}}{z^n}_\xi}. \nonumber
			\end{align}
			By assumption, the two expressions must be equal, so using orthogonality,
			\begin{align}
				\left(a_0^{m-n}+\sum_{\abs{\alpha}=1}a_\alpha^{m+\alpha-n}(\alpha\cdot n)\right)\|z^m\|_\xi^2
				&=\left(\overline{a_0^{n-m}}+\sum_{\abs{\alpha}=1}\overline{a_\alpha^{n+\alpha-m}}(\alpha\cdot m)\right)\|z^n\|_\xi^2 \nonumber 
			\end{align}
			where terms with indices that have negative components are set to 0.
			Notice that
			\begin{align}
				f_0
					=Lz^0
					=\sum_{\beta\in\mathbb{N}_0^N}a_0^\beta z^\beta 
				&=\sum_{\beta\in\mathbb{N}_0^N}\left(\overline{a_0^{-\beta}}+\sum_{\abs{\alpha}=1}\overline{a_\alpha^{\alpha-\beta}}(\alpha\cdot\beta)\right)\frac{\|1\|_\xi^2}{\|z^\beta\|_\xi^2}z^\beta \nonumber \\
				&=\overline{a_0^0}+\sum_{\abs{\beta}=1}\overline{a_\beta^0}\frac{\|1\|_\xi^2}{\|z^\beta\|_\xi^2}z^\beta 
					=\overline{a_0^0}+(N+\xi+1)\sum_{\abs{\beta}=1}\overline{a_\beta^0}z^\beta. \nonumber
			\end{align}
			Similarly, we can see that for $\gamma=(0,\cdots,0,1,0,\cdots,0)$,
			\begin{align}
				f_\gamma
				&=(L-f_0)z^\gamma \nonumber \\
				&=\sum_{\abs{\alpha}=1}\sum_{\beta\in\mathbb{N}_0^N}a_\alpha^\beta(\alpha\cdot\gamma)z^{\beta+\gamma-\alpha}\nonumber \\
				&=\sum_{\beta\in\mathbb{N}_0^N}\left[\left(\overline{a_0^{\gamma-\beta}}+\sum_{\abs{\alpha}=1}\overline{a_\alpha^{\gamma+\alpha-\beta}}(\alpha\cdot\beta)\right)\frac{\|z^\gamma\|_\xi^2}{\|z^\beta\|_\xi^2}-\alpha_0^{\beta-\gamma}\right]z^\beta \nonumber \\
				&=\overline{a_0^\gamma}\frac{\|z^\gamma\|_\xi^2}{\|1\|_\xi^2}+(\overline{a_0^0}+\overline{a_\gamma^\gamma}-a_0^0)z^\gamma+\sum_{\substack{\abs{\beta}=1 \\ \beta\neq\gamma}}\overline{a_\beta^\gamma}z^\beta \nonumber \\
				&\qqquad+\left(2\overline{a_\gamma^0}\frac{\|z^\gamma\|_\xi^2}{\|z^{2\gamma}\|_\xi^2}-a_0^\gamma\right)z^{2\gamma}+\sum_{\substack{\abs{\beta}=2 \\ \beta\neq2\gamma}}\left(\overline{a^0_{\beta-\gamma}}\frac{\|z^\gamma\|_\xi^2}{\|z^\beta\|_\xi^2}-a_0^{\beta-\gamma}\right)z^\beta. \nonumber \\
				&=\overline{a_0^\gamma}\frac{\|z^\gamma\|_\xi^2}{\|1\|_\xi^2}+(\overline{a_0^0}+\overline{a_\gamma^\gamma}-a_0^0)z^\gamma+\sum_{\substack{\abs{\beta}=1 \\ \beta\neq\gamma}}\overline{a_\beta^\gamma}z^\beta \nonumber \\
				&\qqquad+\left(2\overline{a_\gamma^0}\frac{\|z^\gamma\|_\xi^2}{\|z^{2\gamma}\|_\xi^2}-a_0^\gamma\right)z^{2\gamma}+\sum_{\substack{\abs{\beta}=1 \\ \beta\neq\gamma}}\left(\overline{a^0_\beta}\frac{\|z^\gamma\|_\xi^2}{\|z^{\beta+\gamma}\|_\xi^2}-a_0^\beta\right)z^{\beta+\gamma} \nonumber \\
				&=\frac{\overline{a_0^\gamma}}{N+\xi+1}+(\overline{a_0^0}+\overline{a_\gamma^\gamma}-a_0^0)z^\gamma \nonumber \\
				&\qqquad+\sum_{\substack{\abs{\beta}=1 \\ \beta\neq\gamma}}\overline{a_\beta^\gamma}z^\beta+\sum_{\abs{\beta}=1}\left(\overline{a_\beta^0}\frac{N+\xi+2}{N+\xi+1}-a_0^\beta\right)z^{\beta+\gamma}. \nonumber
			\end{align}
			Since $L$ is assumed to be symmetric,
			\begin{align}
				\begin{cases}
					a_0^0=\overline{a_0^0}\in\mathbb{R} & \text{from }(\abs{n},\abs{m})=(0,0), \\
					a_0^m=(N+\xi+1)\overline{a_m^0} & \text{from }(\abs{n},\abs{m})=(0,1),(1,0), \\
					a_n^m=\overline{a_m^n} & \text{from }(\abs{n},\abs{m})=(1,1),m\neq n, \\
					a_m^m=\overline{a_m^m}\in\mathbb{R} & \text{from }(\abs{n},\abs{m})=(1,1),m=n,
				\end{cases} \nonumber
			\end{align}
			which gives us the equality
			\begin{align}
				f_\gamma
				&=a_\gamma^0+\sum_{\abs{\beta}=1}(a_\gamma^\beta z^\beta+\overline{a_\beta^0}z^{\beta+\gamma}) \nonumber
			\end{align}
			as desired.
	
			Assume $L$ satisfies (\ref{form1}, \ref{form2}, \ref{relations1}), so
			\begin{align}
				L&=\left(a_0^0+(N+\xi+1)\sum_{\abs{\beta}=1}\overline{a_\beta^0}z^\beta\right) \nonumber \\
				&\qqquad+\sum_{\abs{\alpha}=1}\left(a_\alpha^0+\sum_{\abs{\beta}=1}(a_\alpha^\beta z^\beta+\overline{a_\beta^0}z^{\beta+\alpha})\right)\partial^\alpha \nonumber
			\end{align}
			where the coefficients satisfy the relations
			\begin{align}
				a_0^0,a_\alpha^\alpha\in\mathbb{R}, \qqquad a_0^\alpha=(N+\xi+1)\overline{a_\alpha^0}, \qqquad a_\alpha^\beta=\overline{a_\beta^\alpha} \nonumber
			\end{align}
			where $\abs{\alpha},\abs{\beta}=1$ and $\alpha\neq\beta$.
			For any $n\in\mathbb{N}_0^N$,
			\begin{align}
				Lz^n 
				&=\left(a_0^0z^n+(N+\xi+1)\sum_{\abs{\beta}=1}\overline{a_\beta^0}z^{\beta+n}\right) \nonumber \\
				&\qqquad+\sum_{\abs{\alpha}=1}\left(a_\alpha^0 z^{n-\alpha}+\sum_{\abs{\beta}=1}(a_\alpha^\beta z^{\beta+n-\alpha}+\overline{a_\beta^0}z^{\beta+n})\right)(\alpha\cdot n). \nonumber 
			\end{align}
			By orthogonality of the $z^n$ and conjugate symmetry of inner products,
			\begin{align}
                          \ip{Lz^n}{z^n}_\xi
                          &=\left(a_0^0+\sum_{\abs{\alpha}=1}a_\alpha^\alpha(\alpha\cdot n)\right)\|z^n\|_\xi^2
                            =\ip{z^n}{Lz^n}_\xi, \nonumber \\
                          \ip{Lz^n}{z^{\gamma+n}}_\xi
                          &=(N+\xi+1+\abs{n})\overline{a_\gamma^0}\|z^{\gamma+n}\|_\xi^2
                            =\ip{z^n}{Lz^{\gamma+n}}_\xi, \nonumber \\
				\ip{Lz^n}{z^{n-\gamma}}_\xi
				&=a_\gamma^0(\gamma\cdot n)\|z^{n-\gamma}\|_\xi^2
					=\ip{z^n}{Lz^{n-\gamma}}_\xi, \nonumber \\
				\ip{Lz^n}{z^{\gamma_1+n-\gamma_2}}_\xi&=a_{\gamma_2}^{\gamma_1}(\gamma_2\cdot n)\|z^{\gamma_1+n-\gamma_2}\|_\xi^2
					=\ip{z^n}Lz^{\gamma_1+n+\gamma_2}_\xi \nonumber
			\end{align}
			where $\abs{\gamma},\abs{\gamma_1},\abs{\gamma_2}=1$ and $\gamma_1\neq\gamma_2$.
			Also, $\ip{Lz^n}{z^m}=\ip{z^n}{Lz^m}=0$ for all other $m$.
			Since $n\in\mathbb{N}_0^N$ is arbitrary, this concludes that $L$ is symmetric.
		\end{proof}

                \subsubsection*{Proof of Theorem~\ref{mainthm} for the unit ball}

                Now, we approach finding the operators using the holomorphic
                discrete series
                representation of $\mathrm{SU}(N,1)$.	
		For any element $x\in\mathrm{SU}(N,1)$ denoted
			\begin{align}
				x:=
				\begin{bmatrix}
					a & b \\
					c^T & d
				\end{bmatrix} \nonumber
			\end{align}
			where $a\in M_N$, $b,c\in\mathbb{C}^{N}$ and $d\in\mathbb{C}$, consider the representation $\pi_\xi(x):\mathcal{A}_\xi^2(\mathbb{B}^N)\longrightarrow\mathcal{A}_\xi^2(\mathbb{B}^N)$ given by
			\begin{align}
				\pi_\xi(x)f(z):=\frac{1}{(-\ip{z}{b}+\overline{d})^{N+\xi+1}}f\left(\frac{a^*z-\overline{c}}{-\ip{z}{b}+\overline{d}}\right), \nonumber 
			\end{align}
			where $\ip{\cdot}{\cdot}$ here is the usual inner product on $\mathbb{C}^N$.
			It is known that $(\pi_\xi,\mathcal{A}_\xi^2(\mathbb{B}^N))$ is a unitary representation of $\mathrm{SU}(N,1)$ when $\xi>-1$ is an integer.
			Moreover, it defines a unitary representation of the universal covering group of $\mathrm{SU}(N,1)$ for all $\xi>-1$. In this paper we do not need to make a distinction between these groups, since their Lie algebras are the same and the exponential mapping is a local diffeomorphism.
		Let a general $X\in M_{N+1}$ be denoted with entries
		\begin{align}
			X:=
			\begin{bmatrix}
				x_{1,1} &  \cdots & x_{1,N+1} \\
				\vdots & \ddots & \vdots \\
				x_{N+1,1} & \cdots & x_{N+1,N+1}
			\end{bmatrix}. \nonumber
		\end{align}
		Then, $X\in\mathfrak{su}(N,1)$ if and only if
		\begin{align}
			&\begin{bmatrix}
				x_{1,1} & \cdots & x_{1,N} & -x_{1,N+1} \\
				\vdots & \ddots & \vdots & \vdots \\
				x_{N+1,1} & \cdots & x_{N+1,N} & -x_{N+1,N+1}
			\end{bmatrix} 
			=
			\begin{bmatrix}
				-\overline{x_{1,1}} & \cdots & -\overline{x_{N+1,1}} \\
				\vdots & \ddots & \vdots \\
				-\overline{x_{1,N}} & \cdots & -\overline{x_{N+1,N}} \\
				\overline{x_{1,N+1}} & \cdots & \overline{x_{N+1,N+1}}
			\end{bmatrix} \nonumber
		\end{align}
		and $\mathrm{tr}(X)=0$, i.e.,
		\begin{align}
			X&=
			\begin{bmatrix}
				x_{1,1} & x_{1,2} & x_{1,3} & \cdots & x_{1,N-1} & x_{1,N} & x_{1,N+1} \\
				-\overline{x_{1,2}} & x_{2,2} & x_{2,3} & \cdots & x_{2,N-1} & x_{2,N} & x_{2,N+1} \\
				-\overline{x_{1,3}} & -\overline{x_{2,3}} & x_{3,3} & \cdots & x_{3,N-1} & x_{3,N} & x_{3,N+1} \\ 
				\vdots & \vdots & \vdots & \ddots & \vdots & \vdots & \vdots \\
				-\overline{x_{1,N-1}} & -\overline{x_{2,N-1}} & -\overline{x_{3,N-1}} & \cdots & x_{N-1,N-1} & x_{N-1,N} & x_{N-1,N+1} \\
				-\overline{x_{1,N}} & -\overline{x_{2,N}} & -\overline{x_{3,N}} & \cdots & -\overline{x_{N-1,N}} & x_{N,N} & x_{N,N+1} \\[0.25em]
				\overline{x_{1,N+1}} & \overline{x_{2,N+1}} & \overline{x_{3,N+1}} & \cdots & \overline{x_{N-1,N+1}} & \overline{x_{N,N+1}} & -\sum_{j=1}^Nx_{j,j}\,\,
			\end{bmatrix} \nonumber 
		\end{align}
		where $x_{j,j}\in i\mathbb{R}$ for $j\in\{1,\cdots,N\}$.
		Equivalently, $X\in\mathfrak{su}(N,1)$ if and only if
		\begin{align}
			\begin{cases}
				x_{j,j}\in i\mathbb{R} & \text{if }j\in\{1,\cdots,N\}, \\
				x_{j,k}=-\overline{x_{k,j}}\in\mathbb{C} & \text{if }j>k \text{ and } j\in\{2,\cdots,N\}, \\
				x_{j,k}=\overline{x_{k,j}}\in\mathbb{C} & \text{if }j=N+1 \text{ and } k\in\{1,\cdots N\}.
			\end{cases} \nonumber
		\end{align} 
		Let $E_{j,k}$ denote the $(N+1)\times(N+1)$ matrix with $1$ in the $(j,k)$ entry and 0's elsewhere.
		A basis for $\mathfrak{su}(N,1)$ is the collection of matrices
		\begin{align}
			&\{iE_{j,j}-iE_{N+1,N+1}\,|\,1\leq j\leq N\}, \nonumber \\
			&\{E_{j,k}-E_{k,j}\,|\,j<k,2\leq k\leq N\}, \nonumber \\
			&\{iE_{j,k}+iE_{k,j}\,|\,j<k,2\leq k\leq N\}, \nonumber \\
			&\{E_{j,N+1}+E_{N+1,j}\,|\,1\leq j\leq N\}, \nonumber \\
			&\{iE_{j,N+1}-iE_{N+1,j}\,|\,1\leq j\leq N\}. \nonumber
		\end{align}
		These basis elements give us the operators
		\begin{align}
			\pi_\xi(\underbrace{iE_{j,j}-iE_{N+1,N+1}}_{\mathfrak{X}_1^j})
			&=-i(N+\xi+1)-2iz_j\frac{\partial}{\partial z_j}-\sum_{\substack{\ell=1 \\ \ell\neq j}}^Niz_\ell\frac{\partial}{\partial z_\ell}, \nonumber \\
			\pi_\xi(\underbrace{E_{j,k}-E_{k,j}}_{\mathfrak{X}_2^{j,k}})
			&=-z_k\frac{\partial}{\partial z_j}+z_j\frac{\partial}{\partial z_k}, \nonumber \\
			\pi_\xi(\underbrace{iE_{j,k}+iE_{k,j}}_{\mathfrak{X}_3^{j,k}})
			&=-iz_k\frac{\partial}{\partial z_j}-iz_j\frac{\partial}{\partial z_k}, \nonumber \\
			\pi_\xi(\underbrace{E_{j,N+1}+E_{N+1,j}}_{\mathfrak{X}_4^j})
			&=(N+\xi+1)z_j+z_j^2\frac{\partial}{\partial z_j}-\frac{\partial}{\partial z_j}+\sum_{\substack{\ell=1 \\ \ell\neq j}}^Nz_\ell z_j\frac{\partial}{\partial z_\ell}, \nonumber \\
			\pi_\xi(\underbrace{iE_{j,N+1}-iE_{N+1,j}}_{\mathfrak{X}_5^j})
			&=i(N+\xi+1)z_j+iz_j^2\frac{\partial}{\partial z_j}+i\frac{\partial}{\partial z_j}+\sum_{\substack{\ell=1 \\ \ell\neq j}}^Niz_\ell z_j\frac{\partial}{\partial z_\ell}. \nonumber
		\end{align}
		Notice that $\mathfrak{Y}\in\mathfrak{su}(N,1)$ if and only if there are $a_1^j,a_2^{j,k},a_3^{j,k},a_4^j,a_5^j\in\mathbb{R}$, such that 
			\begin{align}
				\mathfrak{Y}
				&=\sum_{j=1}^N[a_1^j\mathfrak{X}_1^j+a_4^j\mathfrak{X}_4^j+a_5^j\mathfrak{X}_5^j]+\sum_{k=2}^N\sum_{j=1}^{k-1}[a_2^{j,k}\mathfrak{X}_2^{j,k}+a_3^{j,k}\mathfrak{X}_3^{j,k}]. \nonumber 
			\end{align}
			Thus, any self-adjoint operator coming from the representation of $\mathfrak{su}(N,1)$ is given by $i\pi_\xi(\mathfrak{Y})$, which has form
			\begin{align}
				i\left(\sum_{j=1}^N[a_1^j\pi_\xi(\mathfrak{X}_1^j)+a_4^j\pi_\xi(\mathfrak{X}_4^j)+a_5^j\pi_\xi(\mathfrak{X}_5^j)]+\sum_{k=2}^N\sum_{j=1}^{k-1}[a_2^{j,k}\pi_\xi(\mathfrak{X}_2^{j,k})+a_3^{j,k}\pi_\xi(\mathfrak{X}_3^{j,k})]\right)  \nonumber
			\end{align}
			and can be expanded to 
			\begin{align}
				&(N+\xi+1)\sum_{j=1}^N\left[a_1^j+(ia_4^j-a_5^j)z_j\right] \nonumber \\
				&+\sum_{j=1}^N\left[(-ia_4^j-a_5^j)+\left(2a_1^j+\sum_{\substack{\ell=1 \\ \ell\neq j}}^Na_1^j\right)z_j+(ia_4^j-a_5^j)z_j^2+\sum_{\substack{\ell=1 \\ \ell\neq j}}^N(ia_4^\ell-a_5^\ell)z_\ell z_j\right]\frac{\partial}{\partial z_j} \nonumber \\
				&+\underbrace{\sum_{k=2}^N\sum_{j=1}^{k-1}\left[(-ia_2^{j,k}+a_3^{j,k})z_k\frac{\partial}{\partial z_j}+(ia_2^{j,k}+a_3^{j,k})z_j\frac{\partial}{\partial z_k}\right]}_{(*)} \nonumber
			\end{align}
			where $(*)$ can be written as
			\begin{align}
				\sum_{\omega=1}^N\left[\sum_{\ell=1}^{\omega-1}(ia_2^{\ell,\omega}+a_3^{\ell,\omega})z_\ell+\sum_{\ell=\omega+1}^N(-ia_2^{\omega,\ell}+a_3^{\omega,\ell})z_\ell\right]\frac{\partial}{\partial z_\omega}. \nonumber
			\end{align}
			Thus, we have the equivalent expression for $i\pi_\xi(\mathfrak{Y})$:
			\begin{align}
				i\pi_\xi(\mathfrak{Y})&=\left(\frac{N+\xi+1}{N+1}\sum_{\abs{\alpha}=1}a_\alpha^\alpha+(N+\xi+1)\sum_{\abs{\beta}=1}\overline{a_\beta^0}z^\beta\right) \nonumber \\
				&+\sum_{\abs{\alpha}=1}\left(a_\alpha^0+\sum_{\substack{\abs{\beta}=1}}\left(a_\alpha^\beta z^\beta+\overline{a_\beta^0}z^{\beta+\alpha}\right)\right)\left(\alpha\cdot\left(\frac{\partial}{\partial z_1},\cdots,\frac{\partial}{\partial z_N}\right)\right) \nonumber
			\end{align}
			where $a_\alpha^\alpha\in\mathbb{R}$.
			Notice that any operator in Theorem \ref{FODO} can be attained via translating a $i\pi_\xi(\mathfrak{Y})$ by some scaled identity, i.e., 
			\begin{align}
				L
				&=i\pi_\xi(\mathfrak{Y})-\left(\frac{N+\xi+1}{N+1}\sum_{\abs{\alpha}=1}a_\alpha^\alpha-a_0^0\right) \nonumber
			\end{align}
			where $a_0^0\in\mathbb{R}$, which shows that $(\pi_\xi,\mathcal{A}_\xi^2(\mathbb{B}^N))$ generates all the first-order self-adjoint differential operators on $\mathcal{A}_\xi^2(\mathbb{B}^N)$.

		\begin{remark} \label{ZhuRemark}
			Kehe Zhu in \cite{Zhu2015} asked if there exists self-adjoint $A,B$ on 
			$\mathcal{A}_\xi^2(\mathbb{D})$ ($\mathbb{D}$ the unit disk)
			such that $[A,B]=\lambda I$ for $\lambda\in\mathbb{C}\setminus\{0\}$, where
			$I$ is the identity operator.
			We showed in \cite{Deng2022} that this is not possible if we required $A,B$ be 
first-order self-adjoint differential operators.
			The answer is also negative when extended to $\mathcal{A}_\xi^2(\mathbb{B}^N)$, as that would require $a_\alpha^\alpha$ to be 0 for 
			every $\abs{\alpha}=1$, which means 
			$\frac{N+\xi+1}{N+1}\sum_{\omega=1}^Na_\alpha^\alpha=0$ as well.
		\end{remark}

                        \subsubsection*{Proof of Theorem~\ref{euler-isometry} for the unit ball}
                       
					To avoid introducing too many variables, we will re-use functions $f,g$ for different parts of this proof.
					It should be clear to the reader when the functions are being re-defined.
					Let $f\in\mathcal{A}_\xi^2(\mathbb{B}^N)$ and let $f(z):=\sum_{\beta\in\mathbb{N}_0^N}a^\beta z^\beta$.
					Notice that $\frac{(c+\abs{\beta})^2}{(N+\xi+2+\abs{\beta})(N+\xi+1+\abs{\beta})}$ converges
					to a positive number as $\abs{\beta}\to\infty$,
					so there is $C>0$ such that 
					\begin{align}
						\frac{1}{C}
						&\leq\frac{(c+\abs{\beta})^2}{(N+\xi+2+\abs{\beta})(N+\xi+1+\abs{\beta})}
							\leq C. \nonumber
					\end{align}
					Then, it follows that
					\begin{align}
						\norm{\widetilde{L}f}_{\xi+2}
						&=\sum_{\beta\in\mathbb{N}_0^N}(c+\abs{\beta})^2\abs{a^\beta}^2\norm{z^\beta}_{\xi+2}^2 \nonumber \\
						&=\sum_{\beta\in\mathbb{N}_0^N}(c+\abs{\beta})^2\frac{(N+\xi+2)(N+\xi+1)}{(N+\xi+2+\abs{\beta})(N+\xi+1+\abs{\beta})}\abs{a^\beta}^2\norm{z^\beta}_\xi^2 \nonumber \\
						&\leq(N+\xi+2)(N+\xi+1)C\sum_{\beta\in\mathbb{N}_0^N}\abs{a^\beta}^2\norm{z^\beta}_\xi^2 \nonumber \\
						&=(N+\xi+2)(N+\xi+1)C\norm{f}_\xi^2<\infty, \nonumber
					\end{align}
					so $\widetilde{L} f\in\mathcal{A}_{\xi+2}^2(\mathbb{B}^N)$, hence $\widetilde{L}$ is well-defined.
					Let $f,g\in\mathcal{A}_\xi^2(\mathbb{B}^N)$, where $f(z):=\sum_{\beta\in\mathbb{N}_0^N}a^\beta z^\beta$ and
					$g(z):=\sum_{\beta\in\mathbb{N}_0^N}b^\beta z^\beta$.	
					Clearly, 
					\begin{align}
						\widetilde{L}f(z)=\widetilde{L}g(z)
						&\iff\sum_{\beta\in\mathbb{N}_0^N}(c+\abs{\beta})a^\beta z^\beta=\sum_{\beta\in\mathbb{N}_0^N}(c+\abs{\beta})b^\beta z^\beta
							\iff a^\beta=b^\beta \nonumber 
					\end{align}
					for all $\beta\in\mathbb{N}_0^N$, so $\mathcal{L}$ is injective.
					Now, let $f\in\mathcal{A}_{\xi+2}^2(\mathbb{B}^N)$ and denote $f(z):=\sum_{\beta\in\mathbb{N}_0^N}a^\beta z^\beta$.
					Let us define $g(z):=\sum_{\beta\in\mathbb{N}_0^N}\frac{a^\beta}{c+\abs{\beta}}z^\beta$, which is well-defined because $c$
					is not 0 nor a negative integer, hence $c+\abs{\beta}\neq0$ for all $\beta\in\mathbb{N}_0^N$.
					Since
					\begin{align}
						\norm{g}_\xi^2
						&=\sum_{\beta\in\mathbb{N}_0^N}\frac{\abs{a^\beta}^2}{(c+\abs{\beta})^2}\norm{z^\beta}_\xi^2 \nonumber \\
						&\leq\frac{C}{(N+\xi+2)(N+\xi+1)}\sum_{\beta\in\mathbb{N}_0^N}\frac{(N+\xi+2)(N+\xi+1)\abs{a^\beta}^2}{(N+\xi+2+\abs{\beta})(N+\xi+1+\abs{\beta})}\norm{z^\beta}_\xi^2 \nonumber \\
						&=\frac{C}{(N+\xi+2)(N+\xi+1)}\sum_{\beta\in\mathbb{N}_0^N}\abs{a^\beta}^2\norm{z^\beta}_{\xi+2}^2 \nonumber \\
						&=\frac{C}{(N+\xi+2)(N+\xi+1)}\norm{f}_{\xi+2}^2
							<\infty, \nonumber
					\end{align}
					so $g\in\mathcal{A}_\xi^2(\mathbb{B}^N)$.
					Since $\widetilde{L}g(z)=f(z)$, we have that $\widetilde{L}$ is surjective.
					Finally, let $f\in\mathcal{A}_\xi^2(\mathbb{B}^N)$ and let $f(z)=\sum_{\beta\in\mathbb{N}_0^N}a^\beta z^\beta$.
					Then, 
					\begin{align}
						&\frac{(N+\xi+2)(N+\xi+1)}{C}\underbrace{\sum_{\beta\in\mathbb{N}_0^N}\abs{a^\beta}^2\norm{z^\beta}_\xi^2}_{=\norm{f}_\xi^2} \nonumber \\
						&\leq\underbrace{(N+\xi+2)(N+\xi+1)\sum_{\beta\in\mathbb{N}_0^N}\frac{(c+\abs{\beta})^2}{(N+\xi+2+\abs{\beta})(N+\xi+1+\abs{\beta})}\abs{a^\beta}^2\norm{z^\beta}_\xi^2}_{=\norm{\widetilde{L}f}_{\xi+2}^2} \nonumber \\
						&\leq(N+\xi+2)(N+\xi+1)C\underbrace{\sum_{\beta\in\mathbb{N}_0^N}\abs{a^\beta}^2\norm{z^\beta}_\xi^2}_{=\norm{f}_\xi^2} \nonumber
					\end{align}
					thus $\widetilde{L}$ is continuous.
					The argument for $\widetilde{L}^{-1}$ is linear, and continuity follows from the same argument using the $C$ bounds.
					Thus $\widetilde{L}$ is a linear homeomorphims.

\subsection{The case of the Generalized Unit Disk}

In this section we consider the domain of complex $2\times 2$ matrices
$Z$ for which $I-Z^*Z$ is positive definite which is the
same as assuming $I-ZZ^*>0$.
First, we will describe how the integral over $\mathbb{D}^2$ can be written
as a iterated integral over two unit balls $\mathbb{B}^2$ in $\mathbb{C}^2$.
Let
$Z=\begin{bmatrix} z_1 & z_2 \\z_3 & z_4
   \end{bmatrix} = [V \mid  W]$
   so $I-ZZ^*$ is equivalent to $I-VV^*-WW^*>0$. 
   Since $VV^*\geq 0$ we
   get that $I-WW^*$ is positive definite.
   Therefore, $I-WW^*=T$ for some positive definite operator
   $T$. Then $T$ is diagonalizable and there is
   a unitary matrix $U$ and a diagonal matrix $D$ such
   that $U^*T U = D$. We will find $U$ and $D$ in terms
   of $z_1,z_3$.
   Notice

   \begin{equation*}
     I-VV^* = \begin{bmatrix}
                1-|z_1|^2 & -z_1\overline{z}_3 \\
                -\overline{z}_1z_3 & 1-|z_3|^2
              \end{bmatrix}
            \end{equation*}
            which has eigenvalues
            $\lambda=1-|z_1|^2-|z_3|^2$ and $1$.
            The orthonormal
            eigenvectors are 
            $v_1=\frac{1}{\sqrt{|z_1|^2+|z_3|^2}}\begin{bmatrix} z_1 \\ z_3
                     \end{bmatrix}
                     $
                     and
                     $v_2= \frac{1}{\sqrt{|z_1|^2+|z_3|^2}}
            \begin{bmatrix}
              -\overline{z}_3 \\ \overline{z}_1
            \end{bmatrix}$.
            Thus
            $$I-VV^* =
            \frac{1}{|z_1|^2+|z_3|^2}
            \begin{bmatrix}
                -\overline{z}_3 & z_1\\
               \overline{z}_1 & z_3
            \end{bmatrix}
            \begin{bmatrix}
              1&0\\
              0& 1-|z_1|^2-|z_3|^2
            \end{bmatrix}
            \begin{bmatrix}
              \overline{z_3} & \overline{z}_1   \\
               z_1 & -z_3
            \end{bmatrix}
            $$
            So we can let $U=\frac{1}{\sqrt{|z_1|^2+|z_3|^2}}
            \begin{bmatrix}
                -\overline{z}_3 &z_1\\
              \overline{z}_1 & z_3
            \end{bmatrix}$
            and
            \begin{equation*}
              D= \begin{bmatrix}
              1 &  0\\
              0 & \lambda
            \end{bmatrix}
          \end{equation*}
          Notice that $\sqrt{T}=U^*\sqrt{D}U$
          and if we replace $W$ by $\sqrt{T}W_1$, then
          $I-VV^* -WW^*= I-VV^*-\sqrt{T}W_1W_1^*\sqrt{T}
          = T-\sqrt{T}W_1W_1^*\sqrt{T} =
          \sqrt{T}(I-W_1W_1^*)\sqrt{T}$. Since $\sqrt{T}$ is self-adjoint and
          strictly positive, $I-ZZ^*$ is positive definite if and only
          if $I-W_1W_1^*$ is. Notice that $I-W_1W_1^*>0$ is equivalent
          to $1-W_1^*W_1>0$, so this is the same as requiring $W_1$
          do be in the unit ball $B$ of $\mathbb{C}^2$.
          Lastly note that
          $\det(I-Z^*Z) = \det(I-ZZ^*) = \det(\sqrt{T}(I-W_1W_1^*)\sqrt{T})
          =\det(T)(1-W_1^*W_1)$ and $\det(T)=1-V^*V$.
          Equip the domain with the Euclidean measure $\mathrm{d}Z$
          inherited from $\mathbb{C}^4$ (or $\mathbb{R}^8$).
          Then, for $\xi>-1$,
          \begin{align}
            \int_{\mathbb{D}^2} f(Z)h(Z)^\xi \,dZ
            &= \int_{\mathbb{D}^2} f(Z)(\det(I-ZZ^*))^\xi \,dZ \\
            &= \int_{\mathbb{B}^2}    \int_{\mathbb{B}^2} f([V\mid \sqrt{T} W_1 ]) \det(T) \det(T)^{\xi}(1-W_1^*W_1)^\xi \,\mathrm{d}V \,\mathrm{d}W_1 \nonumber \\
            &= \int_{\mathbb{B}^2}    (1-V^*V)^{\xi+1}
              \int_{\mathbb{B}^2} f([V\mid \sqrt{T} W_1]) (1-W_1^*W_1)^\xi
              \,\mathrm{d}W_1\,\mathrm{d}V  \nonumber
          \end{align}
          Since the ball $\mathbb{B}^2$ is invariant under multiplication by
          a unitary matrix, we get
          \begin{equation*}
            \int_{\mathbb{D}^2} f(Z)\abs{\det(T)}^\xi\,dZ
            = \int_{\mathbb{B}^2}    (1-V^*V)^{\xi+1}
            \int_{\mathbb{B}^2} f([V \mid U\sqrt{D} W_1])
            (1-W_1^*W_1)^\xi \,\mathrm{d}W_1\,\mathrm{d}V.
          \end{equation*}
          Denoting $Z^n=z_1^{n_1}z_2^{n_2}z_3^{n_3}z_4^{n_4}$, we have
	\begin{align}
		\ip{Z^n}{Z^m}_\xi
		=&\int_{\mathbb{B}^2}\frac{z_1^{n_1}\overline{z_1^{m_1}}z_3^{n_3}\overline{z_3^{m_3}}}{(\sqrt{\abs{z_1}^2+\abs{z_3}^2})^{n_2+n_4+m_2+m_4}}(1-\abs{z_1}^2-\abs{z_3}^2)^{\xi+1} \nonumber \\
		&\int_{\mathbb{B}^2}(\sqrt{\lambda}z_1z_4-z_2\overline{z_3})^{n_2}(\sqrt{\lambda}\overline{z_1}\overline{z_4}-\overline{z_2}z_3)^{m_2} \nonumber \\
		&\qqquad(\sqrt{\lambda}z_3z_4+\overline{z_1}z_2)^{n_4}(\sqrt{\lambda}\overline{z_3}\overline{z_4}+z_1\overline{z_2})^{m_4} (1-|z_2|^2-|z_4|^2)^\xi \,\mathrm{d}W\,\mathrm{d}V \nonumber \\
		=&\sum_{\alpha=0}^{n_2}\sum_{\beta=0}^{m_2}\sum_{\gamma=0}^{n_4}\sum_{\varphi=0}^{m_4}\binom{n_2}{\alpha}\binom{m_2}{\beta}\binom{n_4}{\gamma}\binom{m_4}{\varphi}(-1)^{n_2+m_2-(\alpha+\beta)} \nonumber \\
		&\int_{\mathbb{B}^2}\frac{z_1^{n_1+\alpha+m_4-\varphi}\overline{z_1^{m_1+\beta+n_4-\gamma}}z_3^{n_3+m_2-\beta+\gamma}\overline{z_3^{m_3+n_2-\alpha+\varphi}}}{(\sqrt{\abs{z_1}^2+\abs{z_3}^2})^{n_2+n_4+m_2+m_4}}\nonumber \\
		&\qqquad(1-\abs{z_1}^2-\abs{z_3}^2)^{\xi+1+\frac{1}{2}(\alpha+\beta+\gamma+\varphi)}\,\mathrm{d}V \nonumber \\
		&\int_{\mathbb{B}^2}
			z_2^{n_2+n_4-(\alpha+\gamma)}\overline{z_2^{m_2+m_4-(\beta+\varphi)}}
			z_4^{\alpha+\gamma}\overline{z_4^{\beta+\varphi}}(1-|z_2|^2-|z_4|^2)^\xi\,\mathrm{d}W. \nonumber
	\end{align}
	We know that the monomials on the unit ball are orthogonal, which means that for the above integral to be non-zero, the following relations must all hold:
	\begin{align}
		n_1+\alpha+m_4-\varphi&=m_1+\beta+n_4-\gamma, \nonumber \\
		n_2+n_4-(\alpha+\gamma)&=m_2+m_4-(\beta+\varphi), \nonumber \\
		n_3+m_2-\beta+\gamma&=m_3+n_2-\alpha+\varphi, \nonumber \\
		\alpha+\gamma&=\beta+\varphi. \nonumber
	\end{align}
	Combining the relations, we get that the inner-product is only non-zero when
	\begin{align}
		n_2+n_4&=m_2+m_4, \nonumber \\
		n_1+n_2&=m_1+m_2, \nonumber \\
		n_2+m_3&=n_3+m_2. \nonumber
	\end{align}
	Also, by combining the three relations above, we get that
	\begin{align}
		n_1+n_2+n_3+n_4=m_1+m_2+m_3+m_4, \nonumber
	\end{align}
	which also shows the already known statement
        that monomials of different homogenous degrees are orthogonal.
	
	\begin{proposition}\label{LforGenDisk}
          Let $f_\alpha\in\mathcal{A}_\xi^2(\mathbb{D}^2)$
          be denoted $f_\alpha(z)=\sum_{\beta\in\mathbb{N}_0^4}a_\alpha^\beta z^\beta$ where $\alpha\in\mathbb{N}_0^4$. 
		If the operator $L=f_0+\sum_{k=1}^4 f_{e_k} \frac{\partial}{\partial z_k}$ with domain $\mathcal{D}(L)=P(\mathbb{D}^2)$ is symmetric, then
		\begin{equation*}
                 (*)\,\, \begin{cases}
                	f_0\,=a_0^0+(\xi+4)\sum_{k=1}^4\overline{a_{e_k}^0}z^{e_k}, \\
			f_{e_1}=a_{e_1}^0+a_{e_1}^{e_1}z^{e_1}+a_{e_1}^{e_2}z^{e_2}+a_{e_1}^{e_3}z^{e_3}+\overline{a_{e_4}^0}z^{e_2+e_3}+\sum_{k\neq4} \overline{a_{e_k}^0}z^{e_1+e_k}, \\
			f_{e_2}=a_{e_2}^0+a_{e_2}^{e_2}z^{e_2}+\overline{a_{e_1}^{e_2}}z^{e_1}+a_{e_1}^{e_3}z^{e_4}+\overline{a_{e_3}^0}z^{e_1+e_4}+\sum_{k\neq 3} \overline{a_{e_k}^0}z^{e_2+e_k},  \\
			f_{e_3}=a_{e_3}^0+a_{e_3}^{e_3}z^{e_3}+a_{e_1}^{e_2}z^{e_4}+\overline{a_{e_1}^{e_3}}z^{e_1}+\overline{a_{e_2}^0}z^{e_1+e_4}+\sum_{k\neq2} \overline{a_{e_k}^0}z^{e_3+e_k}, \\
			f_{e_4}=a_{e_4}^0+a_{e_4}^{e_4}z^{e_4}+\overline{a_{e_1}^{e_2}}z^{e_3}+\overline{a_{e_1}^{e_3}}z^{e_2}+\overline{a_{e_1}^0}z^{e_2+e_3}+\sum_{k\neq1}\overline{a_{e_k}^0}z^{e_4+e_k} 		    \\
                 \text{ with }  a_0^0,a_{e_i}^{e_i}\in\mathbb{R},\quad  a_0^{e_i}=(\xi+4)\overline{a_{e_i}^0}, \quad a_{e_1}^{e_1}+a_{e_4}^{e_4}=a_{e_2}^{e_2}+a_{e_3}^{e_3}.        
                  \end{cases}
		\end{equation*}
	 
	\end{proposition}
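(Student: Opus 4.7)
The plan is to imitate the proof of Proposition~\ref{FODO}, applying the symmetry condition $\langle Lz^n,z^m\rangle_\xi=\langle z^n,Lz^m\rangle_\xi$ to pairs of monomials of low degree. Proposition~\ref{prop1} already tells us that $\deg f_0\le 1$ and $\deg f_{e_k}\le 2$, so I can expand
\[
 f_0=a_0^0+\sum_{|\beta|=1}a_0^\beta z^\beta,\qquad f_{e_k}=a_{e_k}^0+\sum_{|\beta|=1}a_{e_k}^\beta z^\beta+\sum_{|\beta|=2}a_{e_k}^\beta z^\beta,
\]
and the task is to pin down every coefficient in terms of $a_0^0$, the four $a_{e_k}^0$, and the matrix $(a_{e_j}^{e_k})$.

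The critical new feature, relative to the ball case, is that monomials of the same total degree are no longer pairwise orthogonal on $\mathbb{D}^2$: the integration identity derived just above the statement shows $\langle z^n,z^m\rangle_\xi=0$ only when $n_1+n_2=m_1+m_2$, $n_2+n_4=m_2+m_4$, and $n_2+m_3=n_3+m_2$ all hold. So individual coefficients cannot be read off directly, and I would proceed in three levels. First, pair $n=0$ with monomials of degree two and three and use the normalization $\|z^{e_k}\|_\xi^2/\|1\|_\xi^2=1/(\xi+4)$ to extract $a_0^0\in\mathbb{R}$ and $a_0^{e_k}=(\xi+4)\overline{a_{e_k}^0}$. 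Second, pair $n,m$ both of degree one: the diagonal pairings give $a_{e_k}^{e_k}\in\mathbb{R}$ and $a_{e_j}^{e_k}=\overline{a_{e_k}^{e_j}}$ for $j\neq k$, while the non-orthogonal pair $(z^{e_1+e_4},z^{e_2+e_3})$, which both arise when $L$ acts on linear monomials, couples the diagonal entries and yields the trace constraint $a_{e_1}^{e_1}+a_{e_4}^{e_4}=a_{e_2}^{e_2}+a_{e_3}^{e_3}$. Third, pair a degree-one monomial with a degree-two monomial: combined with what has already been pinned down, the symmetry condition forces the degree-two part of each $f_{e_k}$ to take the conjugate-linear form in the $\overline{a_{e_j}^0}$ displayed in $(*)$.

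The main obstacle is isolating the ``cross'' coefficient $\overline{a_{e_4}^0}z^{e_2+e_3}$ in $f_{e_1}$ and its cyclic partners appearing in $f_{e_2},f_{e_3},f_{e_4}$. Naively one would expect only terms $\overline{a_{e_j}^0}z^{e_1+e_j}$ for every $j\in\{1,2,3,4\}$, but the pair $(z^{e_1+e_4},z^{e_2+e_3})$ lies in a single two-dimensional block of $\langle\cdot,\cdot\rangle_\xi$ (both multi-indices satisfy the three vanishing relations relative to each other), so the contributions from $\partial/\partial z_1$ and $\partial/\partial z_4$ mix, and the symmetry condition swaps the naive $z^{e_1+e_4}$ term for the observed $z^{e_2+e_3}$. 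To execute this step I would compute the $2\times 2$ Gram matrix of $\{z^{e_1+e_4},z^{e_2+e_3}\}$ directly from the integration formula above the proposition, solve the resulting small linear system, and verify that the output matches $(*)$. The computation is tedious but mechanical, and no genuinely new idea beyond that of Proposition~\ref{FODO} is needed.
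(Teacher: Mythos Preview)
Your plan is the paper's plan, and the recognition that the non-orthogonal block $\{z^{e_1+e_4},z^{e_2+e_3}\}$ is the source of all the new phenomena is exactly right. But your three-level outline omits a fourth level that the paper actually relies on: pairings with \emph{both} $n$ and $m$ of degree two. Three of the thirteen relations in the paper's table come only from such pairings, and two of them are precisely the ones you are worried about. The trace constraint $a_{e_1}^{e_1}+a_{e_4}^{e_4}=a_{e_2}^{e_2}+a_{e_3}^{e_3}$ is obtained from $n=e_i+e_{5-i}$, $m=e_j+e_{5-j}$ (using the off-diagonal entry of that $2\times 2$ Gram matrix you mention), not from degree-one pairings as your level two claims; degree-one pairings see only the orthogonal degree-one monomials and give nothing beyond $a_{e_j}^{e_k}=\overline{a_{e_k}^{e_j}}$. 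The vanishing $a_{e_i}^{e_{5-i}}=0$ (why $f_{e_1}$ has no $z^{e_4}$ term) comes from $n=2e_i$, $m=e_i+e_{5-i}$, and the identification $a_{e_i}^{e_j}=a_{e_{5-j}}^{e_{5-i}}$ (why $f_{e_2}$ carries $a_{e_1}^{e_3}z^{e_4}$ rather than an independent coefficient) comes from $n=e_i+e_{5-i}$, $m=e_i+e_j$. None of these three relations is accessible from your levels one through three.

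So the fix is simply to add the degree-two versus degree-two pairings to your scheme; once you have the $2\times 2$ Gram matrix of $\{z^{e_1+e_4},z^{e_2+e_3}\}$ (which the paper computes explicitly: diagonal entries $\frac{\pi^4(\xi+1)!(\xi+4)}{6(\xi+5)!}$, off-diagonal $-\frac{\pi^4(\xi+1)!}{6(\xi+5)!}$) together with the remaining degree-two norms, all three missing relations drop out of short linear equations. Your description of the ``cross'' coefficient mechanism in the last paragraph is correct in spirit but belongs to this fourth level as well, since it is the same Gram block that does the work.
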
 \label{ball main theorem}
	\begin{proof}
          Assume $L$ is symmetric, so given $n,m\in\mathbb{N}_0^N$, we have $\ip{Lz^n}{z^m}_\xi=\ip{z^n}{Lz^m}_\xi$.
          This equality is the same as
          $$
 \sum_{|\alpha|\leq 1} \sum_{\beta\geq 0}
a_\alpha^\beta c_{\alpha,\gamma} \langle z^{\beta+\gamma-\alpha},z^\rho\rangle_\xi
= \sum_{|\alpha|\leq 1} \sum_{\beta\geq 0}
\overline{a_\alpha^\beta} c_{\alpha,\rho} \langle z^\gamma,z^{\beta+\rho-\alpha}\rangle_\xi,
$$
where $$c_{\alpha,\gamma} =
\begin{cases}
  1 & \alpha=0 \\
  \alpha\cdot \gamma & \gamma-\alpha\geq 0 \\
  0 & \text{else}.
\end{cases}
$$
Plugging in different values of $n$ and $m$ it is possible to determine
the properties of the coefficients of $L$.
Below is a table summarizing the relations and which $m$ and $n$ were
used to derive them.
\renewcommand{\arraystretch}{1.5}

		\begin{table}[H]
			\center
			\begin{tabular}{|c|c|c|}
				\hline
				Number & Relation & Deriving the Relation \\
				\hline\hline
				(1) & $a_0^0\in\mathbb{R}$ & $n=m=0$ \\
				\hline
				(2) & $a_0^{e_i}=(\xi+4)\overline{a_{e_i}^0}$ & $n=e_i, m=0$ \\
				\hline
				(3) & $a_{e_i}^{e_i}\in\mathbb{R}$ & $n=m=e_i$ \\
				\hline
				(4) & $a_{e_i}^{e_{5-i}}=0$ & $n=2e_i, m=e_i+e_{5-i}$ and (5) \\
				\hline
				(5) & $a_{e_i}^{e_j}=\overline{a_{e_j}^{e_i}}$ & $n=e_i, m=e_j$ \\
				\hline
				(6) & $a_{e_i}^{e_j}=a_{e_{5-j}}^{e_{5-i}}$ & $n=e_i+e_{5-i},m=e_i+e_j$ \\
				\hline
				(7) & $a_{e_i}^{2e_i}=\overline{a_{e_i}^0}$ & $n=2e_i, m=e_i$ \\
				\hline
				(8) & $a_{e_i}^{2e_j}=0$ & $n=2e_i, m=e_j$ \\
				\hline
				(9) & $a_{e_i}^{e_i+e_{5-i}}=0$ & $n=e_i+e_{5-i}, m=e_j$ and (2) and (10)\\
				\hline
				(10) & $a_{e_i}^{e_j+e_{5-j}}=\overline{a_{e_{5-i}}^0}$ & $n=e_i+e_{5-i},m=e_i$ \\
				\hline
				(11) & $a_{e_i}^{e_i+e_j}=\overline{a_{e_j}^0}$ & $n=e_i+e_j,m=e_i$ and (2) \\
				\hline
				(12) & $a_{e_i}^{e_{5-i}+e_j}=0$ & $n=e_i+e_j, m=e_{5-i}$ \\
				\hline
				(13) & $a_{e_i}^{e_i}+a_{e_{5-i}}^{e_{5-i}}=a_{e_j}^{e_j}+a_{e_{5-j}}^{e_{5-j}}$ & $n=e_i+e_{5-i},m=e_j+e_{5-j}$ \\
				\hline
			\end{tabular}
		\end{table}
		where $1\leq i\leq 4$ and $j\neq i,5-i$. 
		Combined with Proposition \ref{prop1}, we have our desired $L$.

In the process we have used 
the following norms
\begin{align}
  \|z^0\|^2_\xi&=\frac{\pi^4}{2(\xi+2)(\xi+3)}, \nonumber \\
  \|z^{e_i}\|_\xi^2&=\frac{\pi^4(\xi+1)!}{2(\xi+4)!}, \nonumber \\
  \|z^{2e_i}\|_\xi^2&=\frac{\pi^4(\xi+1)!}{(\xi+5)!}, \nonumber \\
  \|z^{e_i+e_j}\|_\xi^2&=\frac{\pi^4(\xi+1)!}{2(\xi+5)!}, j\neq i,5-i \nonumber \\
  \|z^{e_i+e_{5-i}}\|_\xi^2&=\frac{\pi^4(\xi+1)!(\xi+4)}{6(\xi+5)!}, \nonumber \\
\intertext{and fact that the inner product for $\alpha<\beta$ with $|\alpha|=|\beta|=2$ satisfies}
  \ip{z^{\alpha}}{z^{\beta}}_\xi&=                            
  \begin{cases}
    -\frac{\pi^4(\xi+1)!}{6(\xi+5)!}         & \alpha=e_1+e_4,\beta=e_2+e_3, \\
    0 & \text{else}
  \end{cases}
  \nonumber
\end{align}
as desired.

\end{proof}

\subsubsection*{Proof of Theorem~\ref{mainthm} for the generalized disk}

              For $x\in\mathrm{SU}(2,2)$ and $A,B,C,D\in M_2$, denoted
	\begin{align}
		x^{-1}
		&:=
			\begin{bmatrix}
				A & B \\
				C & D
			\end{bmatrix}, \nonumber
	\end{align}
	consider the discrete series representation $\pi_\xi(x):\mathcal{A}_\xi^2(\mathbb{D}^2)\longrightarrow\mathcal{A}_\xi^2(\mathbb{D}^2)$ given by
	\begin{align}
		\pi_\xi(x)f(Z)
		&=\det(CZ+D)^{-(\xi+4)}f((AZ+B)(CZ+D)^{-1}). \nonumber
	\end{align}
	Similar to the case of $\mathrm{SU}(N,1)$, the representation $(\pi_\xi,\mathcal{A}_\xi^2(\mathbb{D}^2))$ is a unitary representation and it defines a unitary representation of the universal covering group of $\mathrm{SU}(2,2)$.
	Again, we do not need to make a distinction between these groups, since their Lie algebras are the same and the exponential mapping is a local diffeomorphism.
	
	We now approach finding the differential operators that come from the Lie Algebra representation of $\mathrm{SU}(2,2)$.
	Note that $W\in\mathfrak{su}(2,2)$ if and only if 
	\begin{align}
		\begin{bmatrix}
			w_{1,1} & w_{1,2} & -w_{1,3} & -w_{1,4} \\
			w_{2,1} & w_{2,2} & -w_{2,3} & -w_{2,4} \\
			w_{3,1} & w_{3,2} & -w_{3,3} & -w_{3,4} \\ 
			w_{4,1} & w_{4,2} & -w_{4,3} & -w_{4,4}
		\end{bmatrix}
		=
		\begin{bmatrix}
			-\overline{w_{1,1}} & -\overline{w_{2,1}} & -\overline{w_{3,1}} & -\overline{w_{4,1}} \\
			-\overline{w_{1,2}} & -\overline{w_{2,2}} & -\overline{w_{3,2}} & -\overline{w_{4,2}} \\
			\overline{w_{1,3}} & \overline{w_{2,3}} & \overline{w_{3,3}} & \overline{w_{4,3}} \\
			\overline{w_{1,4}} & \overline{w_{2,4}} & \overline{w_{3,4}} & \overline{w_{4,4}} 
		\end{bmatrix} \nonumber
	\end{align}
	and $\mathrm{tr}(W)=0$, i.e.,
	\begin{align}
		W=
		\begin{bmatrix}
			w_{1,1} & w_{1,2} & w_{1,3} & w_{1,4} \\
			-\overline{w_{1,2}} & w_{2,2} & w_{2,3} & w_{2,4} \\
			\overline{w_{1,3}} & \overline{w_{2,3}} & w_{3,3} & w_{3,4} \\
			\overline{w_{1,4}} & \overline{w_{2,4}} & -\overline{w_{3,4}} & -\sum_{j=1}^3w_{j,j}
		\end{bmatrix} \nonumber
	\end{align}
	where $w_{j,j}\in i\mathbb{R}$.
	Then, a basis for $\mathfrak{su}(2,2)$ is the collection of matrices
	\begin{align}
		\mathfrak{A}_1&=
		\begin{bmatrix}
			i & 0 & 0 & 0 \\
			0 & 0 & 0 & 0 \\
			0 & 0 & 0 & 0 \\
			0 & 0 & 0 & -i
		\end{bmatrix},
		\mathfrak{A}_2=
		\begin{bmatrix}
			0 & 0 & 0 & 0 \\
			0 & i & 0 & 0 \\
			0 & 0 & 0 & 0 \\
			0 & 0 & 0 & -i
		\end{bmatrix},
		\mathfrak{A}_3=
		\begin{bmatrix}
			0 & 0 & 0 & 0 \\
			0 & 0 & 0 & 0 \\
			0 & 0 & i & 0 \\
			0 & 0 & 0 & -i
		\end{bmatrix}, \nonumber \\
		\mathfrak{A}_4&=
		\begin{bmatrix}
			0 & 1 & 0 & 0 \\
			-1 & 0 & 0 & 0 \\
			0 & 0 & 0 & 0 \\
			0 & 0 & 0 & 0
		\end{bmatrix},
		\mathfrak{A}_5=
		\begin{bmatrix}
			0 & 0 & 0 & 0 \\
			0 & 0 & 0 & 0 \\
			0 & 0 & 0 & 1 \\
			0 & 0 & -1 & 0
		\end{bmatrix},
		\mathfrak{A}_6=
		\begin{bmatrix}
			0 & i & 0 & 0 \\
			i & 0 & 0 & 0 \\
			0 & 0 & 0 & 0 \\
			0 & 0 & 0 & 0
		\end{bmatrix}, \nonumber \\
		\mathfrak{A}_7&=
		\begin{bmatrix}
			0 & 0 & 0 & 0 \\
			0 & 0 & 0 & 0 \\
			0 & 0 & 0 & i \\
			0 & 0 & i & 0
		\end{bmatrix},
		\mathfrak{A}_8=
		\begin{bmatrix}
			0 & 0 & 1 & 0 \\
			0 & 0 & 0 & 0 \\
			1 & 0 & 0 & 0 \\
			0 & 0 & 0 & 0
		\end{bmatrix},
		\mathfrak{A}_9=
		\begin{bmatrix}
			0 & 0 & 0 & 1 \\
			0 & 0 & 0 & 0 \\
			0 & 0 & 0 & 0 \\
			1 & 0 & 0 & 0
		\end{bmatrix}, \nonumber \\
		\mathfrak{A}_{10}&=
		\begin{bmatrix}
			0 & 0 & 0 & 0 \\
			0 & 0 & 1 & 0 \\
			0 & 1 & 0 & 0 \\
			0 & 0 & 0 & 0
		\end{bmatrix},
		\mathfrak{A}_{11}=
		\begin{bmatrix}
			0 & 0 & 0 & 0 \\
			0 & 0 & 0 & 1 \\
			0 & 0 & 0 & 0 \\
			0 & 1 & 0 & 0
		\end{bmatrix},
		\mathfrak{A}_{12}=
		\begin{bmatrix}
			0 & 0 & i & 0 \\
			0 & 0 & 0 & 0 \\
			-i & 0 & 0 & 0 \\
			0 & 0 & 0 & 0
		\end{bmatrix}, \nonumber \\
		\mathfrak{A}_{13}&=
		\begin{bmatrix}
			0 & 0 & 0 & i \\
			0 & 0 & 0 & 0 \\
			0 & 0 & 0 & 0 \\
			-i & 0 & 0 & 0
		\end{bmatrix}, 
		\mathfrak{A}_{14}=
		\begin{bmatrix}
			0 & 0 & 0 & 0 \\
			0 & 0 & i & 0 \\
			0 & -i & 0 & 0 \\
			0 & 0 & 0 & 0
		\end{bmatrix},
		\mathfrak{A}_{15}=
		\begin{bmatrix}
			0 & 0 & 0 & 0 \\
			0 & 0 & 0 & i \\
			0 & 0 & 0 & 0 \\
			0 & -i & 0 & 0
		\end{bmatrix}, \nonumber
	\end{align}
	and the corresponding skew-adjoint operators from the Lie Algebra are
	\begin{align}
		\pi_\xi(\mathfrak{A}_1^{-1})&=(\xi+4)i+iz_1\frac{\partial f}{\partial z_1}+2iz_2\frac{\partial f}{\partial z_2}+iz_4\frac{\partial f}{\partial z_4}, \nonumber \\
		\pi_\xi(\mathfrak{A}_2^{-1})&=(\xi+4)i+iz_2\frac{\partial f}{\partial z_2}+iz_3\frac{\partial f}{\partial z_3}+2iz_4\frac{\partial f}{\partial z_4}, \nonumber \\
		\pi_\xi(\mathfrak{A}_3^{-1})&=-iz_1\frac{\partial f}{\partial z_1}+iz_2\frac{\partial f}{\partial z_2}-iz_3\frac{\partial f}{\partial z_3}+iz_4\frac{\partial f}{\partial z_4}, \nonumber \\
		\pi_\xi(\mathfrak{A}_4^{-1})&=z_3\frac{\partial f}{\partial z_1}+z_4\frac{\partial f}{\partial z_2}-z_1\frac{\partial f}{\partial z_3}-z_2\frac{\partial f}{\partial z_4}, \nonumber \\
		\pi_\xi(\mathfrak{A}_5^{-1})&=z_2\frac{\partial f}{\partial z_1}-z_1\frac{\partial f}{\partial z_2}+z_4\frac{\partial f}{\partial z_3}-z_3\frac{\partial f}{\partial z_4}, \nonumber \\
		\pi_\xi(\mathfrak{A}_6^{-1})&=iz_3\frac{\partial f}{\partial z_1}+iz_4\frac{\partial f}{\partial z_2}+iz_1\frac{\partial f}{\partial z_3}+iz_2\frac{\partial f}{\partial z_4}, \nonumber \\
		\pi_\xi(\mathfrak{A}_7^{-1})&=-iz_2\frac{\partial f}{\partial z_1}-iz_1\frac{\partial f}{\partial z_2}-iz_4\frac{\partial f}{\partial z_3}-iz_3\frac{\partial f}{\partial z_4}, \nonumber \\
		\pi_\xi(\mathfrak{A}_8^{-1})&=-(\xi+4)z_1+(1-z_1^2)\frac{\partial f}{\partial z_1}-z_1z_2\frac{\partial f}{\partial z_2}-z_1z_3\frac{\partial f}{\partial z_3}-z_2z_3\frac{\partial f}{\partial z_4}, \nonumber \\
		\pi_\xi(\mathfrak{A}_9^{-1})&=-(\xi+4)z_2-z_1z_2\frac{\partial f}{\partial z_1}+(1-z_2^2)\frac{\partial f}{\partial z_2}-z_1z_4\frac{\partial f}{\partial z_3}-z_2z_4\frac{\partial f}{\partial z_4}, \nonumber \\
		\pi_\xi(\mathfrak{A}_{10}^{-1})&=-(\xi+4)z_3-z_1z_3\frac{\partial f}{\partial z_1}-z_1z_4\frac{\partial f}{\partial z_2}+(1-z_3^2)\frac{\partial f}{\partial z_3}-z_3z_4\frac{\partial f}{\partial z_4}, \nonumber \\
		\pi_\xi(\mathfrak{A}_{11}^{-1})&=-(\xi+4)z_4-z_2z_3\frac{\partial f}{\partial z_1}-z_2z_4\frac{\partial f}{\partial z_2}-z_3z_4\frac{\partial f}{\partial z_3}+(1-z_4^2)\frac{\partial f}{\partial z_4}, \nonumber \\
		\pi_\xi(\mathfrak{A}_{12}^{-1})&=(\xi+4)iz_1+i(1+z_1^2)\frac{\partial f}{\partial z_1}+iz_1z_2\frac{\partial f}{\partial z_2}+iz_1z_3\frac{\partial f}{\partial z_3}+iz_2z_3\frac{\partial f}{\partial z_4}, \nonumber \\
		\pi_\xi(\mathfrak{A}_{13}^{-1})&=(\xi+4)iz_2+iz_1z_2\frac{\partial f}{\partial z_1}+i(1+z_2^2)\frac{\partial f}{\partial z_2}+iz_1z_4\frac{\partial f}{\partial z_3}+iz_2z_4\frac{\partial f}{\partial z_4}, \nonumber \\
		\pi_\xi(\mathfrak{A}_{14}^{-1})&=(\xi+4)iz_3+iz_1z_3\frac{\partial f}{\partial z_1}+iz_1z_4\frac{\partial f}{\partial z_2}+i(1+z_3^2)\frac{\partial f}{\partial z_3}+iz_3z_4\frac{\partial f}{\partial z_4}, \nonumber \\
		\pi_\xi(\mathfrak{A}_{15}^{-1})&=(\xi+4)iz_4+iz_2z_3\frac{\partial f}{\partial z_1}+iz_2z_4\frac{\partial f}{\partial z_2}+iz_3z_4\frac{\partial f}{\partial z_3}+i(1+z_4^2)\frac{\partial f}{\partial z_4}. \nonumber
	\end{align}
	Since $\mathfrak{Y}\in\mathfrak{su}(2,2)$ if and only if there are $a_1,\cdots,a_{15}\in\mathbb{R}$ such that 
	\begin{align}
		\mathfrak{Y}=\sum_{k=1}^{15}a_k\pi_\xi(\mathfrak{A}_k), \nonumber
	\end{align}
	then any self-adjoint operator coming form the representation of $\mathfrak{su}(2,2)$ has form 
	\begin{align}
		i\pi_\xi(\mathfrak{Y})
		&=i\sum_{k=1}^{15}a_k\pi_\xi(\mathfrak{A}_k). \nonumber 
	\end{align}

        If $L$ is an operator satisfying $(*)$ from
        Proposition~\ref{LforGenDisk}, then we can
        choose $a_i$'s to satisfy the following system of equations:
	\begin{align}
		a_{e_1}^0&=-a_{12}+ia_8, 
		&&a_{e_2}^0=-a_{13}+ia_9, \nonumber \\
		a_{e_3}^0&=-a_{14}+ia_{10}, 
		&&a_{e_4}^0=-a_{15}+ia_{11}, \nonumber \\
		a_{e_1}^{e_1}&=a_3-a_1, 
		&&a_{e_2}^{e_2}=-2a_1-a_2-a_3, \nonumber \\
		a_{e_3}^{e_3}&=a_3-a_2, 
		&&a_{e_4}^{e_4}=-a_1-2a_2-a_3, \nonumber \\
		a_{e_1}^{e_2}&=a_7+ia_5, 
		&&a_{e_1}^{e_3}=-a_6+ia_4, \nonumber
	\end{align}
	i.e., we choose the $a_i$'s in the following way:
	\begin{align}
		a_1&=-\frac{a_{e_1}^{e_1}}{2}-\frac{a_{e_2}^{e_2}-a_{e_3}^{e_3}}{4}, 
		&&a_2=\frac{a_{e_1}^{e_1}}{2}-\frac{a_{e_2}^{e_2}}{4}-\frac{3a_{e_3}^{e_3}}{4}, 
		&&a_3=\frac{2a_{e_1}^{e_1}-a_{e_2}^{e_2}+a_{e_3}^{e_3}}{4}, \nonumber \\
		a_4&=\mathrm{Im}(a_{e_1}^{e_3}),
		&&a_5=\mathrm{Im}(a_{e_1}^{e_2}), 
		&&a_6=-\mathrm{Re}(a_{e_1}^{e_3}), \nonumber \\
		a_7&=\mathrm{Re}(a_{e_1}^{e_2}), 
		&&a_8=\mathrm{Im}(a_{e_1}^0), 
		&&a_9=\mathrm{Im}(a_{e_2}^0), \nonumber \\
		a_{10}&=\mathrm{Im}(a_{e_3}^0), 
		&&a_{11}=\mathrm{Im}(a_{e_4}^0), 
		&&a_{12}=-\mathrm{Re}(a_{e_1}^0), \nonumber \\
		a_{13}&=-\mathrm{Re}(a_{e_2}^0), 
		&&a_{14}=-\mathrm{Re}(a_{e_3}^0), 
		&&a_{15}=-\mathrm{Re}(a_{e_4}^0). \nonumber 
	\end{align}
	Then $L$ can be written as
	\begin{align*}
		L=i\pi_\xi(\mathfrak{Y})+(3a_1+3a_2)+a_0^0
	\end{align*}
        when the right hand side is restricted to the polynomials.
	
        Since the polynomials are a core for $\pi_\xi(\mathfrak{Y})$
        it follows that if $L$ satisfies
        $(*)$ of Proposition~\ref{LforGenDisk}
        then $L$ is symmetric.
        
	\begin{remark}
          Notice that this argument uses representation theory
          to overcome the difficulty of showing that $L$ satisfying
          $(*)$ of Proposition~\ref{LforGenDisk} is symmetric.
          This is especially useful as we avoid calculating
          inner products of monomials above degree 2 (recall that the 
		  inner products of monomials with
		  same homogenous degree may not be orthogonal on $\mathbb{D}^2$).
          In some sense this means that we just need $L$ to be symmetric
          with respect to polynomials of low degree (which is not a dense set).
	\end{remark}

\appendix
\section{Relations} \label{relations}
	For $n=m=0$, we have
	\begin{align}
		\ip{Lz^0}{z^0}_\xi
		&=\sum_{\beta\in\mathbb{N}_0^4}a_0^\beta\ip{z^\beta}{z^0}_\xi
			=a_0^0\norm{z^0}^2_\xi
			=a_0^0\frac{\pi^4}{2(\xi+2)(\xi+3)}, \nonumber \\
		\ip{z^0}{Lz^0}_\xi
		&=\sum_{\beta\in\mathbb{N}_0^4}\overline{a_0^\beta}\overline{\ip{z^\beta}{z^0}_\xi}
			=\overline{a_0^0}\norm{z^0}^2_\xi
			=\overline{a_0^0}\frac{\pi^4}{2(\xi+2)(\xi+3)}, \nonumber
	\end{align}
	so $a_0^0=\overline{a_0^0}\in\mathbb{R}$.
	For $n= e_i$ and $m=0$, we have
	\begin{align}
		\ip{Lz^{ e_i}}{z^0}_\xi
		&=\sum_{\beta\in\mathbb{N}_0^4}a_0^\beta\ip{z^{\beta+ e_i}}{z^0}_\xi+\sum_{\abs{\alpha}=1}\sum_{\beta\in\mathbb{N}_0^4}a_\alpha^\beta(\alpha\cdot\gamma)\ip{z^{\beta+\gamma-\alpha}}{z^0}_\xi \nonumber \\
		&=a_{ e_i}^0\norm{z^0}^2_\xi
			=a_{ e_i}^0\frac{\pi^4}{2(\xi+2)(\xi+3)}, \nonumber \\
		\ip{z^{ e_i}}{Lz^0}_\xi
		&=\sum_{\beta\in\mathbb{N}_0^4}\overline{a_0^\beta}\overline{\ip{z^{\beta}}{z^{ e_i}}_\xi}
			=\overline{a_0^{ e_i}}\norm{z^{ e_i}}^2_\xi
			=\overline{a_0^{ e_i}}\frac{\pi^4(\xi+1)!}{2(\xi+4)!}, \nonumber
	\end{align}
	so $a_0^{ e_i}=(\xi+4)\overline{a_{ e_i}^0}$.
	For $n= e_i$ and $m= e_i$, we have
	\begin{align}
		\ip{Lz^{ e_i}}{z^{ e_i}}_\xi
		&=\sum_{\beta\in\mathbb{N}_0^4}a_0^\beta\ip{z^{\beta+ e_i}}{z^{ e_i}}_\xi+\sum_{\abs{\alpha}=1}\sum_{\beta\in\mathbb{N}_0^4}a_\alpha^\beta(\alpha\cdot e_i)\ip{z^{\beta+ e_i-\alpha}}{z^{ e_i}}_\xi \nonumber \\
		&=a_0^0\norm{z^{ e_i}}^2_\xi+a_{ e_i}^{ e_i}\norm{z^{ e_i}}^2_\xi
			=a_0^0\frac{\pi^4(\xi+1)!}{2(\xi+4)!}+a_{ e_i}^{ e_i}\frac{\pi^4(\xi+1)!}{2(\xi+4)!}, \nonumber \\
		\ip{z^{ e_i}}{Lz^{ e_i}}_\xi
		&=\sum_{\beta\in\mathbb{N}_0^4}\overline{a_0^\beta}\overline{\ip{z^{\beta+ e_i}}{z^{ e_i}}_\xi}+\sum_{\abs{\alpha}=1}\sum_{\beta\in\mathbb{N}_0^4}\overline{a_\alpha^\beta}(\alpha\cdot e_i)\overline{\ip{z^{\beta+ e_i-\alpha}}{z^{ e_i}}_\xi} \nonumber \\
		&=\overline{a_0^0}\norm{z^{ e_i}}^2_\xi+\overline{a_{ e_i}^{ e_i}}\norm{z^{ e_i}}^2_\xi
			=\overline{a_0^0}\frac{\pi^4(\xi+1)!}{2(\xi+4)!}+\overline{a_{ e_i}^{ e_i}}\frac{\pi^4(\xi+1)!}{2(\xi+4)!}, \nonumber
	\end{align}
	so it follows that $a_{ e_i}^{ e_i}=\overline{a_{ e_i}^{ e_i}}\in\mathbb{R}$. 
	For $n= e_i$ and $m= e_j$, we have
	\begin{align}
		\ip{Lz^{ e_i}}{z^{ e_j}}_\xi
		&=\sum_{\beta\in\mathbb{N}_0^4}a_0^\beta\ip{z^{\beta+ e_i}}{z^{ e_j}}_\xi+\sum_{\abs{\alpha}=1}\sum_{\beta\in\mathbb{N}_0^4}a_\alpha^\beta(\alpha\cdot e_i)\ip{z^{\beta+ e_i-\alpha}}{z^{ e_j}}_\xi \nonumber \\
		&=a_{ e_i}^{ e_j}\norm{z^{ e_j}}^2_\xi
			=a_{ e_i}^{ e_j}\frac{\pi^4(\xi+1)!}{2(\xi+4)!}, \nonumber \\
		\ip{z^{ e_i}}{Lz^{ e_j}}_\xi
		&=\sum_{\beta\in\mathbb{N}_0^4}\overline{a_0^\beta}\overline{\ip{z^{\beta+ e_j}}{z^{ e_i}}_\xi}+\sum_{\abs{\alpha}=1}\sum_{\beta\in\mathbb{N}_0^4}\overline{a_\alpha^\beta}(\alpha\cdot e_j)\overline{\ip{z^{\beta+ e_j-\alpha}}{z^{ e_i}}_\xi} \nonumber \\
		&=\overline{a_{ e_j}^{ e_i}}\norm{z^{ e_i}}^2_\xi
			=\overline{a_{ e_j}^{ e_i}}\frac{\pi^4(\xi+1)!}{2(\xi+4)!}, \nonumber
	\end{align}
	which gives the relation $a_{ e_i}^{ e_j}=\overline{a_{ e_j}^{ e_i}}$.
	For $n=2 e_i$ and $m= e_i$, we have
	\begin{align}
		\ip{Lz^{2 e_i}}{z^{ e_i}}_\xi
		&=\sum_{\beta\in\mathbb{N}_0^4}a_0^\beta\ip{z^{\beta+2 e_i}}{z^{ e_i}}_\xi+\sum_{\abs{\alpha}=1}\sum_{\beta\in\mathbb{N}_0^4}a_\alpha^\beta(\alpha\cdot(2 e_i))\ip{z^{\beta+2 e_i-\alpha}}{z^{ e_i}}_\xi \nonumber \\
		&=2a_{ e_i}^0\norm{z^{ e_i}}^2_\xi
			=a_{ e_i}^0\frac{\pi^4(\xi+1)!}{(\xi+4)!}, \nonumber \\
		\ip{z^{2 e_i}}{Lz^{ e_i}}_\xi
		&=\sum_{\beta\in\mathbb{N}_0^4}\overline{a_0^\beta}\overline{\ip{z^{\beta+ e_i}}{z^{2 e_i}}_\xi}+\sum_{\abs{\alpha}=1}\sum_{\beta\in\mathbb{N}_0^4}\overline{a_\alpha^\beta}(\alpha\cdot e_i)\overline{\ip{z^{\beta+ e_i-\alpha}}{z^{2 e_i}}_\xi} \nonumber \\
		&=\overline{a_0^{ e_i}}\norm{z^{2 e_i}}^2_\xi+\overline{a^{2 e_i}_{ e_i}}\norm{z^{2 e_i}}^2_\xi
			=\overline{a_0^{ e_i}}\frac{\pi^4(\xi+1)!}{(\xi+5)!}+\overline{a_{ e_i}^{2 e_i}}\frac{\pi^4(\xi+1)!}{(\xi+5)!}, \nonumber
	\end{align}
	so we have that $a_{ e_i}^{2 e_i}=(\xi+5)\overline{a_{ e_i}^0}-a^{ e_i}_0=\overline{a^{ e_i}_0}$.
	For $n=2 e_i$ and $m= e_j$, we have
	\begin{align}
		\ip{Lz^{2 e_i}}{z^{ e_j}}
		&=\sum_{\beta\in\mathbb{N}_0^4}a_0^\beta\ip{z^{\beta+2 e_i}}{z^{ e_j}}+\sum_{\abs{\alpha}=1}\sum_{\beta\in\mathbb{N}_0^4}a^\beta_\alpha(\alpha\cdot(2 e_i))\ip{z^{\beta+2 e_i-\alpha}}{z^{ e_j}}
			=0, \nonumber \\
		\ip{z^{2 e_i}}{Lz^{ e_j}}
		&=\sum_{\beta\in\mathbb{N}_0^4}\overline{a^\beta_0}\overline{\ip{z^{\beta+ e_j}}{z^{2 e_i}}}+\sum_{\abs{\alpha}=1}\sum_{\beta\in\mathbb{N}_0^4}\overline{a^\beta_\alpha}(\alpha\cdot e_j)\overline{\ip{z^{\beta+ e_j-\alpha}}{z^{2 e_i}}} \nonumber \\
		&=\overline{a^{2 e_i}_{ e_j}}\norm{z^{2 e_i}}^2
			=\overline{a^{2 e_i}_{ e_j}}\frac{\pi^4(\xi+1)!}{(\xi+5)!}. \nonumber
	\end{align}
	which gives us $a^{2 e_j}_{ e_i}=\overline{a^{2 e_j}_{ e_i}}=0$.
	For $n=m=2 e_i$, we have
	\begin{align}
		\ip{Lz^{2 e_i}}{z^{2 e_i}}_\xi
		&=\sum_{\beta\in\mathbb{N}_0^4}a^\beta_0\ip{z^{\beta+2 e_i}}{z^{2 e_i}}_\xi+\sum_{\abs{\alpha}=1}\sum_{\beta\in\mathbb{N}_0^4}a^\beta_\alpha(\alpha\cdot(2 e_i))\ip{z^{\beta+2 e_i-\alpha}}{z^{2 e_i}}_\xi \nonumber \\
		&=a_0^0\norm{z^{2 e_i}}^2_\xi+2a^{ e_i}_{ e_i}\norm{z^{2 e_i}}^2_\xi
			=(a_0^0+2a^{ e_i}_{ e_i})\frac{\pi^4(\xi+1)!}{(\xi+5)!}, \nonumber \\
		\ip{z^{2 e_i}}{Lz^{2 e_i}}_\xi
		&=\sum_{\beta\in\mathbb{N}_0^4}\overline{a^\beta_0}\overline{\ip{z^{\beta+2 e_i}}{z^{2 e_i}}_\xi}+\sum_{\abs{\alpha}=1}\sum_{\beta\in\mathbb{N}_0^4}\overline{a^\beta_\alpha}(\alpha\cdot(2 e_i))\overline{\ip{z^{\beta+2 e_i-\alpha}}{z^{2 e_i}}_\xi} \nonumber \\
		&=\overline{a_0^0}\norm{z^{2 e_i}}_\xi+2\overline{a^{ e_i}_{ e_i}}\norm{z^{2 e_i}}^2_\xi
			=(\overline{a_0^0}+2\overline{a^{ e_i}_{ e_i}})\frac{\pi^4(\xi+1)!}{(\xi+5)!}, \nonumber
	\end{align}
	yielding no new relations.
	For $n=2 e_i$ and $m= e_i+ e_{5-i}$, we have
	\begin{align}
		\ip{Lz^{2 e_i}}{z^{ e_i+ e_{5-i}}}_\xi
		&=\sum_{\beta\in\mathbb{N}_0^4}a^\beta_0\ip{z^{\beta+2 e_i}}{z^{ e_i+ e_{5-i}}}_\xi+\sum_{\abs{\alpha}=1}\sum_{\beta\in\mathbb{N}_0^4}a^\beta_\alpha(\alpha\cdot(2 e_i))\ip{z^{\beta+2 e_i-\alpha}}{z^{ e_i+ e_{5-i}}}_\xi \nonumber \\
		&=2a_{ e_i}^{ e_{5-i}}\norm{z^{ e_i+ e_{5-i}}}^2_\xi 
			=2a^{ e_{5-i}}_{ e_i}\frac{\pi^4(\xi+1)!(\xi+4)}{6(\xi+5)!}, \nonumber \\
		\ip{z^{2 e_i}}{Lz^{ e_i+ e_{5-i}}}_\xi
		&=\sum_{\beta\in\mathbb{N}_0^4}\overline{a^\beta_0}\overline{\ip{z^{\beta+( e_i+ e_{5-i})}}{z^{2 e_i}}_\xi} \nonumber \\
		&\qquad+\sum_{\abs{\alpha}=1}\sum_{\beta\in\mathbb{N}_0^4}\overline{a^\beta_\alpha}(\alpha\cdot( e_i+ e_{5-i}))\overline{\ip{z^{\beta+( e_i+ e_{5-i})-\alpha}}{z^{2 e_i}}_\xi} \nonumber \\
		&=\overline{a_{ e_{5-i}}^{ e_i}}\norm{z^{2 e_i}}^2_\xi
			=\overline{a^{ e_i}_{ e_{5-i}}}\frac{\pi^4(\xi+1)!}{(\xi+5)!}, \nonumber
	\end{align}
	which gives us $a^{ e_{5-i}}_{ e_i}=\overline{a^{ e_i}_{ e_{5-i}}}\frac{3}{\xi+4}$, and combined with the relation $a^{ e_i}_{ e_j}=\overline{a^{ e_j}_{ e_i}}$, gives us that $a^{ e_{5-i}}_{ e_i}=\overline{a^{ e_i}_{ e_{5-i}}}=0$.
	For $n=2 e_i$ and $m= e_i+ e_j$, we have
	\begin{align}
		\ip{Lz^{2 e_i}}{z^{ e_i+ e_j}}_\xi
		&=\sum_{\beta\in\mathbb{N}_0^4}a^\beta_0\ip{z^{\beta+2 e_i}}{z^{ e_i+ e_j}}_\xi \nonumber \\
		&\qquad+\sum_{\abs{\alpha}=1}\sum_{\beta\in\mathbb{N}_0^4}a^\beta_\alpha(\alpha\cdot(2 e_i))\ip{z^{\beta+2 e_i-\alpha}}{z^{ e_i+ e_j}}_\xi \nonumber \\
		&=2a_{ e_i}^{ e_j}\norm{z^{ e_i+ e_j}}^2_\xi
			=a^{ e_j}_{ e_i}\frac{\pi^4(\xi+1)!}{(\xi+5)!}, \nonumber \\
		\ip{z^{2 e_i}}{Lz^{ e_i+ e_j}}_\xi
		&=\sum_{\beta\in\mathbb{N}_0^4}\overline{a^\beta_0}\overline{\ip{z^{\beta+( e_i+ e_j)}}{z^{2 e_i}}_\xi} \nonumber \\
		&\qquad+\sum_{\abs{\alpha}=1}\sum_{\beta\in\mathbb{N}_0^4}\overline{a^\beta_\alpha}(\alpha\cdot( e_i+ e_j))\overline{\ip{z^{\beta+( e_i+ e_j)-\alpha}}{z^{2 e_i}}_\xi} \nonumber \\
		&=\overline{a_{ e_j}^{ e_i}}\norm{z^{2 e_i}}^2_\xi
			=\overline{a^{ e_i}_{ e_j}}\frac{\pi^4(\xi+1)!}{(\xi+5)!}, \nonumber
	\end{align}
	yielding no new relations.
	For $n=2 e_i$ and $m= e_j+ e_{5-j}$, we have
	\begin{align}
		\ip{Lz^{2 e_i}}{z^{ e_j+ e_{5-j}}}_\xi
		&=\sum_{\beta\in\mathbb{N}_0^4}a^\beta_0\ip{z^{\beta+2 e_i}}{z^{ e_j+ e_{5-j}}}_\xi \nonumber \\
		&\qquad+\sum_{\abs{\alpha}=1}\sum_{\beta\in\mathbb{N}_0^4}a^\beta_\alpha(\alpha\cdot(2 e_i))\ip{z^{\beta+2 e_i-\alpha}}{z^{ e_j+ e_{5-j}}}_\xi \nonumber \\
		&=2a_{ e_i}^{ e_{5-i}}\ip{z^{ e_i+ e_{5-i}}}{z^{ e_j+ e_{5-j}}}_\xi
			=a^{ e_{5-i}}_{ e_i}\frac{\pi^4(\xi+1)!}{3(\xi+5)!}, \nonumber \\
		\ip{z^{2 e_i}}{Lz^{ e_j+ e_{5-j}}}_\xi
		&=\sum_{\beta\in\mathbb{N}_0^4}\overline{a^\beta_0}\overline{\ip{z^{\beta+( e_j+ e_{5-j})}}{z^{2 e_i}}_\xi} \nonumber \\
		&\qquad+\sum_{\abs{\alpha}=1}\sum_{\beta\in\mathbb{N}_0^4}\overline{a^\beta_\alpha}(\alpha\cdot( e_j+ e_{5-j}))\overline{\ip{z^{\beta+( e_j+ e_{5-j})-\alpha}}{z^{2 e_i}}_\xi}
			=0, \nonumber
	\end{align}
	which yields no new relations.
	For $n= e_i+ e_{5-i}$ and $m= e_i$,
	\begin{align}
		\ip{Lz^{ e_i+ e_{5-i}}}{z^{ e_i}}_\xi
		&=\sum_{\beta\in\mathbb{N}_0^4}a^\beta_0\ip{z^{\beta+(\gamma+ e_{5-i})}}{z^{ e_i}}_\xi \nonumber \\
		&\qquad+\sum_{\abs{\alpha}=1}\sum_{\beta\in\mathbb{N}_0^4}a^\beta_\alpha(\alpha\cdot( e_i+ e_{5-i}))\ip{z^{\beta+( e_i+ e_{5-i})-\alpha}}{z^{ e_i}}_\xi \nonumber \\
		&=a^0_{ e_{5-i}}\norm{z^{ e_i}}^2_\xi
			=a^0_{ e_{5-i}}\frac{\pi^4(\xi+1)!}{2(\xi+4)!}, \nonumber \\
		\ip{z^{ e_i+ e_{5-i}}}{Lz^{ e_i}}_\xi
		&=\sum_{\beta\in\mathbb{N}_0^4}\overline{a^\beta_0}\overline{\ip{z^{\beta+ e_i}}{z^{ e_i+ e_{5-i}}}_\xi} \nonumber \\
		&\qquad+\sum_{\abs{\alpha}=1}\sum_{\beta\in\mathbb{N}_0^4}\overline{a^\beta_\alpha}(\alpha\cdot e_i)\overline{\ip{z^{\beta+ e_i-\alpha}}{z^{ e_i+ e_{5-i}}}_\xi} \nonumber \\
		&=\overline{a^{ e_{5-i}}_0}\norm{z^{ e_i+ e_{5-i}}}^2_\xi \nonumber \\
		&\qquad+\overline{a^{ e_i+ e_{5-i}}_{ e_i}}\norm{z^{ e_i+ e_{5-i}}}^2_\xi+\overline{a^{ e_j+ e_{5-j}}_{ e_i}}\overline{\ip{z^{ e_j+ e_{5-j}}}{z^{ e_i+ e_{5-i}}}_\xi} \nonumber \\
		&=\overline{a^{ e_{5-i}}_0}\frac{\pi^4(\xi+1)!(\xi+4)}{6(\xi+5)!} \nonumber \\
		&\qquad+\overline{a^{ e_i+ e_{5-i}}_{ e_i}}\frac{\pi^4(\xi+1)!(\xi+4)}{6(\xi+5)!}-\overline{a^{ e_j+ e_{5-j}}_{ e_i}}\frac{\pi^4(\xi+1)!}{6(\xi+5)!}, \nonumber
	\end{align}
	thus giving us $a^{ e_j+ e_{5-j}}_{ e_i}=\overline{a^0_{ e_{5-i}}}+(\xi+4)a^{ e_i+ e_{5-i}}_{ e_i}=\overline{a^0_{ e_{5-i}}}$.
	For $n= e_i+ e_{5-i}$ and $m= e_j$,
	\begin{align}
		\ip{Lz^{ e_i+ e_{5-i}}}{z^{ e_j}}_\xi
		&=\sum_{\beta\in\mathbb{N}_0^4}a^\beta_0\ip{z^{\beta+(\gamma+ e_{5-i})}}{z^{ e_j}}_\xi \nonumber \\
		&\qquad+\sum_{\abs{\alpha}=1}\sum_{\beta\in\mathbb{N}_0^4}a^\beta_\alpha(\alpha\cdot( e_i+ e_{5-i}))\ip{z^{\beta+( e_i+ e_{5-i})-\alpha}}{z^{ e_j}}_\xi
			=0, \nonumber \\
		\ip{z^{ e_i+ e_{5-i}}}{Lz^{ e_j}}_\xi
		&=\sum_{\beta\in\mathbb{N}_0^4}\overline{a^\beta_0}\overline{\ip{z^{\beta+ e_j}}{z^{ e_i+ e_{5-i}}}_\xi} \nonumber \\
		&\qquad+\sum_{\abs{\alpha}=1}\sum_{\beta\in\mathbb{N}_0^4}\overline{a^\beta_\alpha}(\alpha\cdot e_j)\overline{\ip{z^{\beta+ e_j-\alpha}}{z^{ e_i+ e_{5-i}}}_\xi} \nonumber \\
		&=\overline{a^{ e_{5-j}}_0}\overline{\ip{z^{ e_j+ e_{5-j}}}{z^{ e_i+ e_{5-i}}}_\xi} \nonumber \\
		&\qquad+\overline{a^{ e_i+ e_{5-i}}_{ e_j}}\norm{z^{ e_i+ e_{5-i}}}^2_\xi+\overline{a^{ e_j+ e_{5-j}}_{ e_j}}\overline{\ip{z^{ e_j+ e_{5-j}}}{z^{ e_i+ e_{5-i}}}_\xi} \nonumber \\
		&=-\overline{a^{ e_{5-j}}_0}\frac{\pi^4(\xi+1)!}{6(\xi+5)!} \nonumber \\
		&\qquad+\overline{a^{ e_i+ e_{5-i}}_{ e_j}}\frac{\pi^4(\xi+1)!(\xi+4)}{6(\xi+5)!}-\overline{a^{ e_j+ e_{5-j}}_{ e_j}}\frac{\pi^4(\xi+1)!}{6(\xi+5)!}, \nonumber 
	\end{align}
	giving us $a^{ e_j+ e_{5-j}}_{ e_i}=a^{ e_{5-i}}_0\frac{1}{\xi+4}+a^{ e_i+ e_{5-i}}_{ e_i}\frac{1}{\xi+4}=\overline{a^0_{ e_{5-i}}}+a^{ e_i+ e_{5-i}}_{ e_i}\frac{1}{\xi+4}$.
	Combining with previous relations, we get that $a^{ e_i+ e_{5-i}}_{ e_i}=0$, which gives the relation $a^{ e_j+ e_{5-j}}_{ e_i}=\overline{a^0_{ e_{5-i}}}$.
	For $n=m= e_i+ e_{5-i}$, we have
	\begin{align}
		\ip{Lz^{ e_i+ e_{5-i}}}{z^{ e_i+ e_{5-i}}}_\xi
		&=\sum_{\beta\in\mathbb{N}_0^4}a^\beta_0\ip{z^{\beta+( e_i+ e_{5-i})}}{z^{ e_i+ e_{5-i}}}_\xi \nonumber \\
		&\quad+\sum_{\abs{\alpha}=1}\sum_{\beta\in\mathbb{N}_0^4}a^\beta_\alpha(\alpha\cdot( e_i+ e_{5-i}))\ip{z^{\beta+( e_i+ e_{5-i})-\alpha}}{z^{ e_i+ e_{5-i}}}_\xi \nonumber \\
		&=a_0^0\norm{z^{ e_i+ e_{5-i}}}^2_\xi+a^{ e_i}_{ e_i}\norm{z^{ e_i+ e_{5-i}}}^2_\xi+a^{ e_{5-i}}_{ e_{5-i}}\norm{z^{ e_i+ e_{5-i}}}^2_\xi \nonumber \\
		&=(a_0^0+a^{ e_i}_{ e_i}+a^{ e_{5-i}}_{ e_{5-i}})\frac{\pi^4(\xi+1)!(\xi+4)}{6(\xi+5)!}, \nonumber \\
		\ip{z^{ e_i+ e_{5-i}}}{Lz^{ e_i+ e_{5-i}}}_\xi
		&=\sum_{\beta\in\mathbb{N}_0^4}\overline{a^\beta_0}\overline{\ip{z^{\beta+( e_i+ e_{5-i})}}_\xi} \nonumber \\
		&\quad+\sum_{\abs{\alpha}=1}\sum_{\beta\in\mathbb{N}_0^4}\overline{a^\beta_\alpha}(\alpha\cdot( e_i+ e_{5-i}))\overline{\ip{z^{\beta+( e_i+ e_{5-i})-\alpha}}{z^{ e_i+ e_{5-i}}}_\xi} \nonumber \\
		&=\overline{a_0^0}\norm{z^{ e_i+ e_{5-i}}}_\xi+\overline{a^{ e_i}_{ e_i}}\norm{z^{ e_i+ e_{5-i}}}_\xi+\overline{a^{ e_{5-i}}_{ e_{5-i}}}\norm{z^{ e_i+ e_{5-i}}}_\xi \nonumber \\
		&\qquad=(\overline{a_0^0}+\overline{a^{ e_i}_{ e_i}}+\overline{a^{ e_{5-i}}_{ e_{5-i}}})\frac{\pi^4(\xi+1)!(\xi+4)}{6(\xi+5)!}, \nonumber
	\end{align} 
	which yields no new relations.
	For $n= e_i+ e_{5-i}$ and $m= e_i+ e_j$,
	\begin{align}
		\ip{Lz^{ e_i+ e_{5-i}}}{z^{ e_i+ e_j}}_\xi
		&=\sum_{\beta\in\mathbb{N}_0^4}a^\beta_0\ip{z^{\beta+( e_i+ e_{5-i})}}{z^{ e_i+ e_j}}_\xi \nonumber \\
		&\quad+\sum_{\abs{\alpha}=1}\sum_{\beta\in\mathbb{N}_0^4}a^\beta_\alpha(\alpha\cdot( e_i+ e_{5-i}))\ip{z^{\beta+( e_i+ e_{5-i})-\alpha}}{z^{ e_i+ e_j}}_\xi \nonumber \\
		&=a^{ e_j}_{ e_{5-i}}\norm{z^{ e_i+ e_j}}_\xi
			=a^{ e_j}_{ e_{5-i}}\frac{\pi^4(\xi+1)!}{2(\xi+5)!}, \nonumber \\
		\ip{z^{ e_i+ e_{5-i}}}{Lz^{ e_i+ e_j}}_\xi
		&=\sum_{\beta\in\mathbb{N}_0^4}\overline{a^{\beta}_0}\overline{\ip{z^{\beta+( e_i+ e_j)}}{z^{ e_i+ e_{5-i}}}_\xi} \nonumber \\
		&\quad+\sum_{\abs{\alpha}=1}\sum_{\beta\in\mathbb{N}_0^4}\overline{a^\beta_\alpha}(\alpha\cdot( e_i+ e_j))\overline{\ip{z^{\beta+( e_i+ e_j)-\alpha}}{z^{ e_i+ e_{5-i}}}_\xi} \nonumber \\
		&=\overline{a^{ e_{5-i}}_{ e_j}}\norm{z^{ e_i+ e_{5-i}}}^2_\xi+\overline{a^{ e_{5-j}}_{ e_i}}\overline{\ip{z^{ e_j+ e_{5-j}}}{z^{ e_i+ e_{5-i}}}_\xi} \nonumber \\
		&\qquad=\overline{a^{ e_{5-i}}_{ e_j}}\frac{\pi^4(\xi+1)!(\xi+4)}{6(\xi+5)!}-\overline{a^{ e_{5-j}}_{ e_i}}\frac{\pi^4(\xi+1)!}{6(\xi+5)!}, \nonumber
	\end{align}
		which gives us the relation $a^{ e_j}_{ e_i}=a^{ e_{5-i}}_{ e_{5-j}}$. 
	For $n= e_i+ e_{5-i}$ and $m= e_j+ e_{5-j}$, we have
	\begin{align}
		\ip{Lz^{ e_i+ e_{5-i}}}{z^{ e_j+ e_{5-j}}}_\xi
		&=\sum_{\beta\in\mathbb{N}_0^4}a^\beta_0\ip{z^{\beta+( e_i+ e_{5-i})}}{z^{ e_j+ e_{5-j}}}_\xi \nonumber \\
		&\quad+\sum_{\abs{\alpha}=1}\sum_{\beta\in\mathbb{N}_0^4}a^\beta_\alpha(\alpha\cdot( e_i+ e_{5-i}))\ip{z^{\beta+( e_i+ e_{5-i})-\alpha}}{z^{ e_j+ e_{5-j}}}_\xi \nonumber \\
		&=a_0^0\ip{z^{ e_i+ e_{5-i}}}{z^{ e_j+ e_{5-j}}}_\xi+a^{ e_i}_{ e_i}\ip{z^{ e_i+ e_{5-i}}}{z^{ e_j+ e_{5-j}}}_\xi \nonumber \\
		&\qquad+a^{ e_{5-i}}_{ e_{5-i}}\ip{z^{ e_i+ e_{5-i}}}{z^{ e_j+ e_{5-j}}}_\xi \nonumber \\
		&=-(a_0^0+a^{ e_i}_{ e_i}+a^{ e_{5-i}}_{ e_{5-i}})\frac{\pi^4(\xi+1)!}{6(\xi+5)!}, \nonumber \\
		\ip{z^{ e_i+ e_{5-i}}}{Lz^{ e_j+ e_{5-j}}}_\xi
		&=\sum_{\beta\in\mathbb{N}_0^4}\overline{a^\beta_0}\overline{\ip{z^{\beta+( e_j+ e_{5-j})}}{z^{ e_i+ e_{5-i}}}_\xi} \nonumber \\
		&\quad+\sum_{\abs{\alpha}=1}\sum_{\beta\in\mathbb{N}_0^4}\overline{a^\beta_\alpha}(\alpha\cdot( e_j+ e_{5-j}))\overline{\ip{z^{\beta+( e_j+ e_{5-j})-\alpha}}{z^{ e_i+ e_i'}}_\xi} \nonumber \\
		&=\overline{a_0^0}\overline{\ip{z^{ e_j+ e_{5-j}}}{z^{ e_i+ e_{5-i}}}_\xi}+\overline{a^{ e_j}_{ e_j}}\overline{\ip{z^{ e_j+ e_{5-j}}}{z^{ e_i+ e_{5-i}}}_\xi} \nonumber \\
		&\qquad+\overline{a^{ e_{5-j}}_{ e_{5-j}}}\overline{\ip{z^{ e_j+ e_{5-j}}}{z^{ e_i+ e_{5-i}}}_\xi} \nonumber \\
		&=-(\overline{a_0^0}+\overline{a^{ e_j}_{ e_j}}+\overline{a^{ e_{5-j}}_{ e_{5-j}}})\frac{\pi^4(\xi+1)!}{6(\xi+5)!}, \nonumber
	\end{align}
	which gives us that $a^{ e_i}_{ e_i}+a^{ e_{5-i}}_{ e_{5-i}}=a^{ e_j}_{ e_j}+a^{ e_{5-j}}_{ e_{5-j}}$.
	For $n= e_i+ e_j$ and $m= e_i$, we have
	\begin{align}
		\ip{Lz^{ e_i+ e_j}}{z^{ e_i}}_\xi
		&=\sum_{\beta\in\mathbb{N}_0^4}a^\beta_0\ip{z^{\beta+( e_i+ e_j)}}{z^{ e_i}}_\xi \nonumber \\
		&\quad+\sum_{\abs{\alpha}=1}\sum_{\beta\in\mathbb{N}_0^4}a^\beta_\alpha(\alpha\cdot( e_i+ e_j))\ip{z^{\beta+( e_i+ e_j)-\alpha}}{z^{ e_i}}_\xi \nonumber \\
		&=a^0_{ e_j}\norm{z^{ e_i}}^2_\xi
			=a^0_{ e_j}\frac{\pi^4(\xi+1)!}{2(\xi+4)!}, \nonumber \\
		\ip{z^{ e_i+ e_j}}{Lz^{ e_i}}_\xi
		&=\sum_{\beta\in\mathbb{N}_0^4}\overline{a^\beta_0}\overline{\ip{z^{\beta+ e_i}}{z^{ e_i+ e_j}}_\xi}+\sum_{\abs{\alpha}=1}\sum_{\beta\in\mathbb{N}_0^4}\overline{a^\beta_\alpha}(\alpha\cdot e_i)\overline{\ip{z^{\beta+ e_i-\alpha}}{z^{ e_i+ e_j}}_\xi} \nonumber \\
		&=\overline{a^{ e_j})_0}\norm{z^{ e_i+ e_j}}^2_\xi+\overline{a_{ e_i}^{ e_i+ e_j}}\norm{z^{ e_i+ e_j}}^2_\xi
			=(\overline{a^{ e_j}_0}+\overline{a_{ e_i}^{ e_i+ e_j}})\frac{\pi^4(\xi+1)!}{2(\xi+5)!}, \nonumber
	\end{align}
	thus $(\xi+5)a^0_{ e_j}=\overline{a^{ e_j}_0}+\overline{a^{ e_i+ e_j}_{ e_i}}$, and using previous relations, gives us $a^{ e_i+ e_j}_{ e_i}=\overline{a^0_{ e_j}}$.
	For $n= e_i+ e_j$ and $m= e_{5-i}$, 
	\begin{align}
		\ip{Lz^{ e_i+ e_j}}{z^{ e_{5-i}}}_\xi
		&=\sum_{\beta\in\mathbb{N}_0^4}a^\beta_0\ip{z^{\beta+( e_i+ e_j)}}{z^{ e_{5-i}}}_\xi \nonumber \\
		&\quad+\sum_{\abs{\alpha}=1}\sum_{\beta\in\mathbb{N}_0^4}a^\beta_\alpha(\alpha\cdot( e_i+ e_j))\ip{z^{\beta+( e_i+ e_j)-\alpha}}{z^{ e_{5-i}}}_\xi=0, \nonumber \\
		\ip{z^{ e_i+ e_j}}{Lz^{ e_{5-i}}}_\xi
		&=\sum_{\beta\in\mathbb{N}_0^4}\overline{a^\beta_0}\overline{\ip{z^{\beta+ e_{5-i}}}{z^{ e_i+ e_j}}_\xi} \nonumber \\
		&\quad+\sum_{\abs{\alpha}=1}\sum_{\beta\in\mathbb{N}_0^4}\overline{a^\beta_\alpha}(\alpha\cdot e_{5-i})\overline{\ip{z^{\beta+ e_{5-i}-\alpha}}{z^{ e_i+ e_j}}_\xi} \nonumber \\
		&=\overline{a_{ e_{5-i}}^{ e_i+ e_j}}\norm{z^{ e_i+ e_j}}^2_\xi
			=\overline{a^{ e_i+ e_j}_{ e_{5-i}}}\frac{\pi^4(\xi+1)!}{2(\xi+5)!}, \nonumber
	\end{align}
	which gives us $a^{ e_{5-i}+ e_j}_{ e_i}=\overline{a^{ e_{5-i}+ e_j}_{ e_i}}=0$.
	For $n= e_i+ e_j$ and $m= e_i+ e_{5-i}$, we have
	\begin{align}
		\ip{Lz^{ e_i+ e_j}}{z^{ e_i+ e_{5-i}}}_\xi
		&=\sum_{\beta\in\mathbb{N}_0^4}a^\beta_0\ip{z^{\beta+( e_i+ e_j)}}{z^{ e_i+ e_{5-i}}}_\xi \nonumber \\
		&\quad+\sum_{\abs{\alpha}=1}\sum_{\beta\in\mathbb{N}_0^4}a^\beta_\alpha(\alpha\cdot( e_i+ e_j))\ip{z^{\beta+( e_i+ e_j)-\alpha}}{z^{ e_i+ e_{5-i}}}_\xi \nonumber \\
		&=a^{ e_{5-i}}_{ e_j}\norm{z^{ e_i+ e_{5-i}}}^2_\xi+a^{ e_{5-j}}_{ e_i}\ip{z^{ e_j+ e_{5-j}}}{z^{ e_i+ e_{5-i}}}_\xi \nonumber \\
		&=a^{ e_{5-i}}_{ e_j}\frac{\pi^4(\xi+1)!(\xi+4)}{6(\xi+5)!}-a^{ e_{5-j}}_{ e_i}\frac{\pi^4(\xi+1)!}{6(\xi+5)!}, \nonumber \\
		\ip{z^{ e_i+ e_j}}{Lz^{ e_i+ e_{5-i}}}_\xi
		&=\sum_{\beta\in\mathbb{N}_0^4}\overline{a^\beta_0}\overline{\ip{z^{\beta+( e_i+ e_{5-i})}}{z^{ e_i+ e_j}}_\xi} \nonumber \\
		&\quad+\sum_{\abs{\alpha}=1}\sum_{\beta\in\mathbb{N}_0^4}\overline{a^\beta_\alpha}(\alpha\cdot( e_i+ e_{5-i}))\overline{\ip{z^{\beta+( e_i+ e_{5-i})-\alpha}}{z^{ e_i+ e_j}}_\xi} \nonumber \\
		&=\overline{a_{ e_{5-i}}^{ e_j}}\norm{z^{ e_i+ e_j}}^2_\xi
			=\overline{a^{ e_j}_{ e_{5-i}}}\frac{\pi^4(\xi+1)!}{2(\xi+5)!} \nonumber
	\end{align}
	which gives us no new relations.

\bibliographystyle{plain}
\bibliography{Bibtex}

\end{document}